\documentclass[10pt,a4paper,reqno]{amsart}
\usepackage[british]{babel}

\usepackage{amssymb}
\usepackage{amsmath}
\usepackage{amsthm}
\usepackage{array}
\usepackage{enumitem}
\usepackage{mathrsfs}
\usepackage[hidelinks]{hyperref}
\usepackage{mathrsfs}


\newtheorem{thm}{Theorem}[section]

\newtheorem{cor}[thm]{Corollary}
\newtheorem{lem}[thm]{Lemma}

\newtheorem{prop}[thm]{Proposition}

\theoremstyle{definition}

\newtheorem{exmpl}[thm]{Example}

\newtheorem{remark}[thm]{Remark}


\renewcommand{\epsilon}{\varepsilon}
\renewcommand{\phi}{\varphi}
\newcommand{\defeq}{\mathrel{\mathop:}=}                         
          
\DeclareMathOperator{\id}{id}
\DeclareMathOperator{\Sym}{Sym}
\DeclareMathOperator{\gr}{gr}
\DeclareMathOperator{\asdim}{asdim}

\makeatletter
\def\moverlay{\mathpalette\mov@rlay}
\def\mov@rlay#1#2{\leavevmode\vtop{%
		\baselineskip\z@skip \lineskiplimit-\maxdimen
		\ialign{\hfil$\m@th#1##$\hfil\cr#2\crcr}}}
\newcommand{\charfusion}[3][\mathord]{
	#1{\ifx#1\mathop\vphantom{#2}\fi
		\mathpalette\mov@rlay{#2\cr#3}
	}
	\ifx#1\mathop\expandafter\displaylimits\fi}
\makeatother

\newcommand{\cupdot}{\charfusion[\mathbin]{\cup}{\cdot}}
\newcommand{\bigcupdot}{\charfusion[\mathop]{\bigcup}{\cdot}}

\begin{document}
		
	\setlist{noitemsep}
	
\title[About von Neumann's problem for locally compact groups]{About von Neumann's problem for locally compact groups}
	
\author{Friedrich Martin Schneider}
\address{Institute of Algebra, TU Dresden, 01062 Dresden, Germany}
\curraddr{Department of Mathematics, The University of Auckland, Private Bag 92019, Auckland 1142, New Zealand}
\email{martin.schneider@tu-dresden.de}
\thanks{This research has been supported by funding of the German Research Foundation (reference no.~SCHN 1431/3-1) as well as by funding of the Excellence Initiative by the German Federal and State Governments.}
	
\subjclass[2010]{Primary 22D05, 43A07, 20E05, 20F65}
	
\date{\today}

\begin{abstract} 
	We note a generalization of Whyte's geometric solution to the von Neumann problem for locally compact groups in terms of Borel and clopen piecewise translations. This strengthens a result of Paterson on the existence of Borel paradoxical decompositions for non-amenable locally compact groups. Along the way, we study the connection between some geometric properties of coarse spaces and certain algebraic characteristics of their wobbling groups.
\end{abstract}

\maketitle
	
\section{Introduction}
	
In his seminal article~\cite{vonNeumann} von Neumann introduced the concept of amenability for groups in order to explain why the Banach-Tarski paradox occurs only for dimension greater than two. He proved that a group containing an isomorphic copy of the free group $F_{2}$ on two generators is not amenable. The converse, i.e., the question whether every non-amenable group would have a subgroup being isomorphic to $F_{2}$, was first posed in print by Day~\cite{day}, but became known as the \emph{von Neumann problem} (or sometimes \emph{von Neumann-Day problem}). The original question has been answered in the negative by Ol'{\v{s}}anski{\u\i}~\cite{olshanskii}. However, there are very interesting positive solutions to variants of the von Neumann problem in different settings: a geometric solution by Whyte~\cite{whyte}, a measure-theoretic solution by Gaboriau and Lyons~\cite{GaboriauLyons} and its generalization to locally compact groups by Gheysens and Monod~\cite{GheysensMonod}, as well as a Baire category solution by Marks and Unger~\cite{MarksUnger}. Whyte's geometric version reads as follows.
	
\begin{thm}[Theorem 6.2 in~\cite{whyte}]\label{theorem:whyte.1} A uniformly discrete metric space of uniformly bounded geometry is non-amenable if and only if it admits a partition whose pieces are uniformly Lipschitz embedded copies of the $4$-regular tree. \end{thm}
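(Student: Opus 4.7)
For the backward direction, the plan is to use that the $4$-regular tree $T_4$ is non-amenable: every finite $F\subseteq T_4$ satisfies the isoperimetric inequality $|\partial F|\ge|F|$. If $X$ admits a partition into uniformly Lipschitz copies of $T_4$ with a common Lipschitz constant $L$, then for any finite $F\subseteq X$ its intersection with each tree piece has a large boundary inside that piece, and the uniform bound $L$ converts this into a proportional lower bound on the $R$-boundary of $F$ in $X$ for some $R$ depending only on $L$. This rules out F\o lner sequences and establishes non-amenability.

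For the forward direction, I would first extract a paradoxical decomposition of $X$ with uniformly bounded displacement. By the Tarski--F\o lner criterion, non-amenability provides $R>0$ such that $|N_R(F)|\ge 2|F|$ for every finite $F\subseteq X$. A locally finite version of Hall's marriage theorem---proved via a compactness argument made possible by the uniformly bounded geometry---then produces a partition $X=A_1\sqcup A_2$ together with bijections $\phi_i\colon A_i\to X$ with $\sup_{x\in A_i} d(x,\phi_i(x))\le R$. Iterating this argument on each copy of $X$ yields a $4$-fold paradoxical decomposition of $X$ with uniformly bounded displacements.

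The second step is to repackage this data as a free action of $F_2=\langle a,b\rangle$ on $X$ by bounded-displacement bijections, following the classical Banach--Tarski style argument that produces $F_2$ out of a $4$-fold paradox. One uses the four bijections to define $\alpha,\beta\colon X\to X$ with finite $\sup_{x}d(x,\alpha(x))$ and $\sup_{x}d(x,\beta(x))$ such that for every $x\in X$ the orbit map $F_2\to X$, $w\mapsto w.x$, is injective. The induced Cayley graph with generators $\alpha^{\pm1},\beta^{\pm1}$ is precisely $T_4$, and bounded displacement of $\alpha,\beta$ makes this identification a uniformly Lipschitz embedding; the orbit partition is then the sought decomposition of $X$ into uniformly Lipschitz copies of $T_4$.

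I expect the main obstacle to be the second step: assembling the free $F_2$-action while preserving uniformly bounded displacement. The abstract group-theoretic construction of $F_2$ from a $4$-fold paradox is standard, but here one must ensure that all compositions of $\alpha^{\pm1},\beta^{\pm1}$ remain at a distance bounded linearly in the word length; this relies crucially on using uniform displacement bounds in both the ``$a$-paradox'' and the ``$b$-paradox'' obtained from Hall's theorem, and on tracking their interaction via the uniformly bounded geometry hypothesis.
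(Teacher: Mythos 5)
Your backward direction and the first step of your forward direction are sound, and the latter is in fact close to the paper's own strategy: the paper also starts from the F\o lner-type criterion (an entourage $E$ with $\vert E[F]\vert \geq d \vert F \vert$ for all finite $F \subseteq X$) and applies the Hall harem theorem, which is precisely the locally finite marriage argument you invoke; note that the paper does all of this for general coarse spaces (its Theorem~\ref{theorem:whyte.2}) and deduces the $F_{2}$/wobbling formulation afterwards (Corollary~\ref{corollary:whyte.2}).

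The gap is in your second step. There is no ``classical Banach--Tarski style argument that produces $F_{2}$ out of a $4$-fold paradox'': the classical implication runs in the opposite direction, from a free action of $F_{2}$ to a paradoxical decomposition. Producing a free $F_{2}$-action by bounded-displacement bijections from a paradoxical decomposition is exactly the hard content of Whyte's theorem, so as written your argument is circular at this point. Concretely, here is what goes wrong. The Hall argument hands you bijections $\phi_{1} \colon A_{1} \to X$ and $\phi_{2} \colon A_{2} \to X$ with $X = A_{1} \sqcup A_{2}$, i.e.\ a $2$-to-$1$ (more generally $(d-1)$-to-$1$) bounded-displacement surjection $f \colon X \to X$. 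The graph with edge set $\{ \{ x, f(x) \} \mid x \in X \}$ has all degrees equal to $d$, but its components are trees \emph{only if $f$ has no periodic points}; nothing in the compactness/Hall argument rules out cycles $f^{n}(x) = x$, and on a component containing a cycle there is no free $F_{2}$-orbit structure --- any generators $\alpha, \beta$ assembled from the pieces will satisfy relations there. The missing idea, which is the heart of the paper's proof of Theorem~\ref{theorem:whyte.2}, is an explicit surgery replacing $f$ by a map $f_{\ast}$ that reroutes each periodic orbit into the infinite trees attached to it, while preserving $\vert f_{\ast}^{-1}(x)\vert = d-1$ and keeping $\gr (f_{\ast}) \subseteq E \circ E$; only after this modification is $\Gamma (\gr (f_{\ast}))$ a $d$-regular forest, yielding the desired partition (equivalently, via the identification of the $4$-regular tree with the Cayley graph of $F_{2}$, a semi-regular copy of $F_{2}$ in the wobbling group). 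You anticipated an obstacle in step 2, but located it in the wrong place: Lipschitz control of words is automatic once the generators have bounded displacement (a word of length $n$ moves points at most $nC$); the genuine difficulty is injectivity of the orbit maps, i.e.\ freeness, i.e.\ killing periodic points.
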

	
In particular, the above applies to Cayley graphs of finitely generated groups and in turn yields a geometric solution to the von Neumann problem.
	
The aim of the present note is to extend Whyte's relaxed version of the von Neumann conjecture to the realm of locally compact groups. For this purpose, we need to view the result from a slightly different perspective. Given a uniformly discrete metric space $X$, its \emph{wobbling group} (or \emph{group of bounded displacement}) is defined as \begin{displaymath}
	\mathscr{W}(X) \defeq \{ \alpha \in \Sym (X) \mid \exists r \geq 0 \, \forall x \in X \colon \, d(x,\alpha (x)) \leq r \} .
\end{displaymath} Wobbling groups have attracted growing attention in recent years~\cite{JuschenkoMonod,juschenko,cornulier}. Since the $4$-regular tree is isomorphic to the standard Cayley graph of $F_{2}$, one can easily reformulate Whyte's in terms of semi-regular subgroups. Let us recall that a subgroup $G \leq \Sym (X)$ is said to be \emph{semi-regular} if no non-identity element of $G$ has a fixed point in $X$.
	
\begin{cor}[Theorem 6.1 in~\cite{whyte}]\label{corollary:whyte.1} A uniformly discrete metric space $X$ of uniformly bounded geometry is non-amenable if and only if $F_{2}$ is isomorphic to a semi-regular subgroup of $\mathscr{W}(X)$. \end{cor}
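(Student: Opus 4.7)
The plan is to show that the two conditions express the same underlying structure, via the identification of the $4$-regular tree with the standard Cayley graph of $F_{2}$ for the free generating set $\{a,b\}$. Under this identification, a semi-regular action of $F_{2}$ on $X$ partitions $X$ into orbits, each of which is parametrised by $F_{2}$ and thus carries a copy of the tree; conversely, a partition of $X$ into uniformly Lipschitz copies of the tree lets one transport the left-regular action of $F_{2}$ on itself piece by piece. In each direction uniformity of the Lipschitz constant translates into a uniform displacement bound and vice versa.

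For the implication that a semi-regular embedding $F_{2} \leq \mathscr{W}(X)$ forces non-amenability, I would consider the partition of $X$ into $F_{2}$-orbits. Choosing a basepoint $x_{O}$ in each orbit $O$, the map $\phi_{O} \colon F_{2} \to O$, $g \mapsto g \cdot x_{O}$, is a bijection by semi-regularity. Let $r$ be a common upper bound for the displacements $d(x,ax)$ and $d(x,bx)$ across $x \in X$, which exists because $a,b \in \mathscr{W}(X)$. Every edge $\{g,sg\}$ of the Cayley graph of $F_{2}$ is then sent by $\phi_{O}$ to a pair of points at distance at most $r$, so $\phi_{O}$ is $r$-Lipschitz, with Lipschitz constant uniform over all orbits. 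Hence $X$ admits a partition into uniformly Lipschitz embedded copies of the $4$-regular tree, and Theorem~\ref{theorem:whyte.1} yields non-amenability.

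Conversely, assume $X$ is non-amenable and apply Theorem~\ref{theorem:whyte.1} to obtain a partition $X = \bigsqcup_{i \in I} T_{i}$ together with uniformly $L$-Lipschitz bijections $\phi_{i} \colon F_{2} \to T_{i}$. I would transport the left-regular action of $F_{2}$ on itself along these parametrisations by setting $g \cdot \phi_{i}(h) \defeq \phi_{i}(gh)$. Injectivity of each $\phi_{i}$ renders the resulting action free, so the induced homomorphism $F_{2} \to \Sym (X)$ is injective with semi-regular image. For each generator $s \in \{a,b\}$, the displacement $d(\phi_{i}(h),\phi_{i}(sh)) \leq L$ is uniform in $i$ and $h$, so $s \in \mathscr{W}(X)$; since $F_{2}$ is generated by $\{a,b\}$ and $\mathscr{W}(X)$ is a subgroup of $\Sym (X)$, the whole image lies in $\mathscr{W}(X)$.

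I anticipate no real obstacle: the corollary is essentially a change of language, with Theorem~\ref{theorem:whyte.1} doing all the geometric work. The only substantive point is the uniformity of the Lipschitz constants (respectively of the wobbling bounds) across the pieces of the relevant partition, which is automatic in both directions from the uniform hypotheses at hand.
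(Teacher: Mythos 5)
Your proof is correct and is essentially the paper's own (implicit) argument: the paper presents Corollary~\ref{corollary:whyte.1} as an immediate reformulation of Theorem~\ref{theorem:whyte.1} via the identification of the $4$-regular tree with the left Cayley graph of $F_{2}$, and your two directions --- orbit maps of a free wobbling action in one direction, transporting the left-regular action along the tree parametrisations in the other --- spell out exactly that translation, with the displacement bound $|g|\cdot r$ (resp.\ $L$) handling uniformity. The only point worth noting is that ``Lipschitz embedded'' must be read as injective plus uniformly Lipschitz (not bi-Lipschitz), which is the reading under which both of your directions, and Whyte's equivalence itself, are consistent.
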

	
For a finitely generated group $G$, the metrics generated by any two finite symmetric generating sets containing the neutral element are equivalent and hence give rise to the very same wobbling group~$\mathscr{W}(G)$. It is easy to see that $\mathscr{W}(G)$ is just the group of piecewise translations of $G$, i.e., a bijection $\alpha \colon G \to G$ belongs to $\mathscr{W}(G)$ if and only if exists a finite partition $\mathscr{P}$ of $G$ such that \begin{displaymath}
	\forall P \in \mathscr{P} \, \exists g \in G \colon \quad \alpha\vert_{P} = \lambda_{g}\vert_{P} .
\end{displaymath} Furthermore, we note that the semi-regularity requirement in the statement above cannot be dropped: in fact, van Douwen~\cite{VanDouwen} showed that $\mathscr{W}(\mathbb{Z})$ contains an isomorphic copy of $F_{2}$, despite $\mathbb{Z}$ being amenable. As it turns out, $F_{2}$ embeds into the wobbling group of any coarse space of positive asymptotic dimension (see Proposition~\ref{proposition:dimension} and Remark~\ref{remark:unbounded}).
	
We are going to present a natural counterpart of Corollary~\ref{corollary:whyte.1} for general locally compact groups. Let $G$ be a locally compact group. We call a bijection $\alpha \colon G \to G$ a \emph{clopen piecewise translation} of $G$ if there exists a finite partition $\mathscr{P}$ of $G$ into clopen subsets such that \begin{displaymath}
	\forall P \in \mathscr{P} \, \exists g \in G \colon \quad \alpha\vert_{P} = \lambda_{g}\vert_{P} ,
\end{displaymath} i.e., on every member of $\mathscr{P}$ the map $\alpha$ agrees with a left translation of $G$. It is easy to see that the set $\mathscr{C}(G)$ of all clopen piecewise translations of $G$ constitutes a subgroup of the homeomorphism group of the topological space $G$ and that the mapping $\Lambda \colon G \to \mathscr{C}(G), \, g \mapsto \lambda_{g}$ embeds $G$ into $\mathscr{C}(G)$ as a \emph{regular}, i.e., semi-regular and transitive, subgroup. Similarly, a bijection $\alpha \colon G \to G$ is called a \emph{Borel piecewise translation} of $G$ if there exists a finite partition $\mathscr{P}$ of $G$ into Borel subsets with \begin{displaymath}
	\forall P \in \mathscr{P} \, \exists g \in G \colon \quad \alpha\vert_{P} = \lambda_{g}\vert_{P} .
\end{displaymath} Likewise, the set $\mathscr{B}(G)$ of all Borel piecewise translations of $G$ is a subgroup of the automorphism group of the Borel space of $G$ and contains $\mathscr{C}(G)$ as a subgroup.

For a locally compact group $G$, both $\mathscr{B}(G)$ and $\mathscr{C}(G)$ are reasonable analogues of the wobbling group. Yet, the mere existence of an embedding of~$F_{2}$ as a semi-regular subgroup of $\mathscr{B}(G)$, or even $\mathscr{C}(G)$, does not prevent $G$ from being amenable. In fact, there are many examples of compact (thus amenable) groups that admit $F_{2}$ as a (non-discrete) subgroup and hence as a semi-regular subgroup of $\mathscr{C}(G)$. For example, since $F_{2}$ is residually finite, it embeds into the compact group formed by the product of its finite quotients. Therefore, we have to seek for a topological analogue of semi-regularity, which amounts to a short discussion.
	
\begin{remark}\label{remark:semiregular} Let $X$ be a set. A subgroup $G \leq \Sym (X)$ is semi-regular if and only if there exists a (necessarily surjective) map $\psi \colon X \to G$ such that $\psi (gx) = g\psi (x)$ for all $g \in G$ and $x \in X$. Obviously, the latter implies the former. To see the converse, let $\sigma \colon X \to X$ be any orbit cross-section for the action of $G$ on $X$, i.e., ${\sigma (X)} \cap {Gx} = \{ \sigma (x) \}$ for every $x \in X$. Since $G$ is semi-regular, for each $x \in X$ there is a unique $\psi (x) \in G$ such that $\psi (x)\sigma (x) = x$. For all $g \in G$ and $x \in X$, we have \begin{displaymath}
	g\psi (x)\sigma (gx) = g\psi (x)\sigma (x) = gx = \psi (gx) \sigma (gx),
\end{displaymath} which readily implies that $\psi (gx) = g\psi (x)$. So, $\psi \colon X \to G$ is as desired. \end{remark}	
	
The purpose of this note is to show the following.
	
\begin{thm}\label{theorem:main} Let $G$ be a locally compact group. The following are equivalent. \begin{enumerate}
	\item[$(1)$] $G$ is not amenable.
	\item[$(2)$] There exist a homomorphism $\phi \colon F_{2} \to \mathscr{C}(G)$ and a Borel measurable map $\psi \colon G \to F_{2}$ such that $\psi \circ \phi (g) = \lambda_{g} \circ \psi$ for all $g \in F_{2}$.
	\item[$(3)$] There exist a homomorphism $\phi \colon F_{2} \to \mathscr{B}(G)$ and a Borel measurable map $\psi \colon G \to F_{2}$ such that $\psi \circ \phi (g) = \lambda_{g} \circ \psi$ for all $g \in F_{2}$.
\end{enumerate} \end{thm}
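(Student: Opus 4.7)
\emph{Easy implications.} The inclusion $\mathscr{C}(G) \leq \mathscr{B}(G)$ gives (2) $\Rightarrow$ (3) for free. For (3) $\Rightarrow$ (1), assume for contradiction that $G$ is amenable and fix a left-invariant finitely additive probability mean $\mu$ on bounded Borel functions on $G$. Every $\alpha \in \mathscr{B}(G)$ is a finite disjoint union of restrictions of Haar-measure-preserving left translations, so $\mu$ is invariant under the whole group $\mathscr{B}(G)$. Given data $(\phi,\psi)$ as in (3), the equivariance rewrites as $\psi^{-1}(gA) = \phi(g)(\psi^{-1}(A))$ for every Borel $A \subseteq F_{2}$ and $g \in F_{2}$, so the pushforward $\psi_{\ast}\mu$ is a left-invariant finitely additive probability mean on $F_{2}$, contradicting the non-amenability of $F_{2}$.

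\emph{Main direction (1) $\Rightarrow$ (2), totally disconnected case.} The overall plan is to reduce $G$ to a uniformly discrete coarse space via a clopen tiling, invoke Corollary~\ref{corollary:whyte.1}, and lift the resulting semi-regular $F_{2}$-action back to $\mathscr{C}(G)$. Since amenability of locally compact groups is closed under extensions, either $G_{0}$ or $G/G_{0}$ is non-amenable, and I treat these in turn. Suppose first $G/G_{0}$ is non-amenable. Pick a compact open subgroup $\bar{K} \leq G/G_{0}$ (available since $G/G_{0}$ is totally disconnected), set $U \defeq \pi^{-1}(\bar{K})$ where $\pi \colon G \to G/G_{0}$ is the quotient map, and choose a transversal $\Gamma \subseteq G$ for the left cosets $G/U$ so that $G = \bigsqcupdot_{\gamma \in \Gamma} \gamma U$ is a clopen partition. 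Equipping $\Gamma$ with the coarse structure induced from a compatible right-invariant proper pseudo-metric on $G$, a Haar-vs-counting comparison identifies non-amenability of $G$ with non-amenability of the coarse space $\Gamma$. Corollary~\ref{corollary:whyte.1} and Remark~\ref{remark:semiregular} then supply a semi-regular embedding $\phi_{0} \colon F_{2} \hookrightarrow \mathscr{W}(\Gamma)$ together with an equivariant $\psi_{0} \colon \Gamma \to F_{2}$. The assignment $\alpha \mapsto \tilde{\alpha}$, $\tilde{\alpha}(\gamma u) \defeq \alpha(\gamma)u$, is a homomorphism $\mathscr{W}(\Gamma) \to \mathscr{C}(G)$: each wobbling has bounded displacement, so $\{\alpha(\gamma)\gamma^{-1} \mid \gamma \in \Gamma\}$ lies in a compact subset of $G$ covered by finitely many $U$-cosets, which yields a finite clopen partition of $G$ on which $\tilde{\alpha}$ agrees with a single left translation. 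Setting $\phi \defeq \widetilde{(\cdot)} \circ \phi_{0}$ and $\psi(\gamma u) \defeq \psi_{0}(\gamma)$ supplies the required data.

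\emph{Connected case.} If instead $G_{0}$ is non-amenable, the classical solution of the von Neumann problem for connected locally compact groups---via Gleason--Yamabe reduction to Lie groups together with the Tits alternative---yields an injective abstract homomorphism $\iota \colon F_{2} \hookrightarrow G_{0} \leq G$, and $\phi \defeq \Lambda \circ \iota \colon F_{2} \to \mathscr{C}(G)$ is automatically semi-regular. The orbits of $\phi(F_{2})$ on the standard Borel space $G$ are countable, so standard Borel selection theorems furnish a Borel transversal $T \subseteq G$, and $\psi(x) \defeq$ the unique $f \in F_{2}$ with $f^{-1}x \in T$ is Borel and equivariant.

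\emph{Main obstacle.} The delicate step is the claim, in the totally disconnected case, that $\alpha \mapsto \tilde{\alpha}$ genuinely lands in $\mathscr{C}(G)$---that only finitely many left-translation elements suffice on a finite clopen partition---together with the matching of coarse-amenability of $\Gamma$ with topological amenability of $G$; both ultimately hinge on a judicious choice of the clopen subgroup $U$ and on the compactness of balls in a proper pseudo-metric. The split into connected and totally disconnected cases is forced because $\mathscr{C}(G) = G$ whenever $G$ is connected, so the tiling construction degenerates when $G_{0} = G$ and a direct abstract embedding of $F_{2}$ into $G$ must be exploited instead.
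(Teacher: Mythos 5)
Your handling of (2)$\Rightarrow$(3) and (3)$\Rightarrow$(1) is correct and is essentially the paper's Lemma~\ref{lemma:amenability}. Both halves of your proof of (1)$\Rightarrow$(2), however, contain genuine gaps.

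In the connected case you produce only an \emph{abstract} embedding $\iota\colon F_{2}\hookrightarrow G_{0}$ and assert that, since the orbits of $\phi(F_{2})$ are countable, ``standard Borel selection theorems'' furnish a Borel transversal. That is false: a countable Borel equivalence relation admits a Borel transversal only when it is smooth, and the translation action of a \emph{non-discrete} countable free subgroup is never smooth. Concretely, if $F\defeq\iota(F_{2})$ is not discrete, set $H\defeq\overline{F}$; a Borel transversal $T$ for the $F$-action on $G$ restricts to one for the $F$-action on $H$, and since $H=\bigcup_{f\in F}f(T\cap H)$, the set $T\cap H$ has positive Haar measure in $H$. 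By the Steinhaus--Weil theorem $(T\cap H)(T\cap H)^{-1}$ is then a neighbourhood of $e$ in $H$, hence contains some $f\in F\setminus\{e\}$ because $F$ is dense in the non-discrete group $H$; writing $f=t_{1}t_{2}^{-1}$ with $t_{1},t_{2}\in T\cap H$ gives two distinct points of $T$ in one $F$-orbit, a contradiction. Nothing in the Gleason--Yamabe/Tits-alternative route guarantees discreteness of $\iota(F_{2})$ (connected Lie groups such as $\mathrm{PSL}(2,\mathbb{R})$ contain dense free subgroups of rank $2$), and for such $\iota$ no equivariant Borel $\psi$ exists at all. This is precisely why the paper invokes Rickert's theorem (Theorem~\ref{theorem:rickert}), which produces a \emph{discrete} copy of $F_{2}$, and only then Feldman--Greenleaf~\cite{FeldmannGreenleaf} for the Borel cross-section; your selection-theorem shortcut cannot replace it.

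In the totally disconnected case two steps fail. First, the lift: $\tilde{\alpha}(\gamma u)=\alpha(\gamma)u$ agrees on the coset $\gamma U$ with left translation by $\alpha(\gamma)\gamma^{-1}$, an element that varies with $\gamma$. Even granting that all these elements lie in finitely many cosets $g_{1}U,\dots,g_{k}U$, translations by distinct elements of $g_{i}U$ are distinct maps, so $\tilde{\alpha}$ is a piecewise translation with \emph{infinitely} many pieces and in general $\tilde{\alpha}\notin\mathscr{C}(G)$. (Your right-invariant metric controls exactly the quantity $\alpha(\gamma)\gamma^{-1}$, i.e.\ displacement on the wrong side; it does not interact with left cosets and left translations.) The lift that works avoids transversals altogether: with the action coarse structure $\mathscr{E}_{G}$ on $G/U$, the wobbling condition yields a finite partition $\mathscr{P}$ of $G/U$ and finitely many elements $g_{P}$ with $\phi_{0}(a)\vert_{P}=\lambda_{g_{P}}\vert_{P}$, and one defines the lift to be $\lambda_{g_{P}}$ on $\pi^{-1}(P)$, which has finitely many clopen pieces by construction. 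Second, the asserted equivalence of non-amenability of $G$ with coarse non-amenability of $\Gamma\cong G/U$ is false as stated: it requires the fibre $U$ to be amenable, whereas your $U=\pi^{-1}(\bar{K})$ contains $G_{0}$, which may be non-amenable in your Case A (take $G=\mathrm{SL}(2,\mathbb{R})\times F$ with $F$ discrete non-amenable); $U$ is also non-compact, so no Haar-versus-counting comparison applies, and proper pseudo-metrics exist only on $\sigma$-compact groups. What you need, and what is true, is that $\bar{K}$ is compact, hence amenable, so Lemma~\ref{lemma:paterson} applied to $G/G_{0}$ shows that non-amenability of $G/G_{0}$ forces the $G$-action on $G/U\cong(G/G_{0})/\bar{K}$ to be non-amenable, whence Proposition~\ref{proposition:amenability.of.actions} gives coarse non-amenability. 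The paper's case split---on amenability of an almost connected open subgroup $H$ supplied by van Dantzig, rather than on $G_{0}$ versus $G/G_{0}$---is engineered precisely so that the fibre is amenable in the coarse case and Rickert applies in the other; with that split and the corrected lift, your outline becomes the paper's proof.
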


We remark that any map $\phi$ as in~(2) or~(3) of Theorem~\ref{theorem:main} has to be injective. In view of the discussion above, we also note that for finitely generated discrete groups the statement of Theorem~\ref{theorem:main} reduces to Whyte's geometric solution to the von Neumann problem. More specifically, the existence of a map $\psi$ as in~(2) or~(3) above may be thought of as a Borel variant of the semi-regular embedding condition in Corollary~\ref{corollary:whyte.1}. In general, we cannot arrange for $\psi$ to be continuous, as there exist non-amenable connected locally compact groups
		
Both~(2) and~(3) of Theorem~\ref{theorem:main} may be considered relaxed versions of containing $F_{2}$ as a discrete subgroup: according to a result of Feldman and Greenleaf~\cite{FeldmannGreenleaf}, if $H$ is a $\sigma$-compact metrizable closed (e.g., countable discrete) subgroup of a locally compact group $G$, then the right coset projection $G \to H\! \setminus \! G, \, x \mapsto Hx$ admits a Borel measurable cross-section $\tau \colon H \! \setminus \! G \to G$, and hence the $H$-equivariant map \begin{displaymath}
	\psi \colon G \to H, \quad x \mapsto x\tau (Hx)^{-1}
\end{displaymath} is Borel measurable, too. This particularly applies if $H \cong F_{2}$ is discrete.

The proof of Theorem~\ref{theorem:main} combines a result of Rickert resolving the original von Neumann problem for almost connected locally compact groups (Theorem~\ref{theorem:rickert}) with a slight generalization of Whyte's result for coarse spaces (Theorem~\ref{theorem:whyte.2}) and in turn refines an argument of Paterson proving the existence of Borel paradoxical decompositions for non-amenable locally compact groups~\cite{paterson86}. In fact, Theorem~\ref{theorem:main} implies Paterson's result~\cite{paterson86}.

\begin{cor}[Paterson~\cite{paterson86}]\label{corollary:paterson} A locally compact group $G$ is non-amenable if and only if it admits a Borel paradoxical decomposition, i.e., there exist finite partitions $\mathscr{P}$ and $\mathscr{Q}$ of $G$ into Borel subsets and $g_{P},h_{Q} \in G$ $(P \in \mathscr{P}, \, Q \in \mathscr{Q})$ such that \begin{displaymath}
	G = \bigcupdot_{P \in \mathscr{P}} g_{P}P \cupdot \bigcupdot_{Q \in \mathscr{Q}} h_{Q}Q .
\end{displaymath} \end{cor}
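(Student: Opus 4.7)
The plan is to derive Corollary~\ref{corollary:paterson} from Theorem~\ref{theorem:main}. One direction is essentially immediate: any Borel paradoxical decomposition is a fortiori a set-theoretic paradoxical decomposition, so averaging the characteristic functions of the pieces against a hypothetical left-invariant mean on $L^{\infty}(G)$ (or invoking Tarski's theorem directly) forces $G$ to be non-amenable. The real content lies in the other direction.

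Assume $G$ is non-amenable and apply clause~$(3)$ of Theorem~\ref{theorem:main} to obtain a homomorphism $\phi \colon F_{2} \to \mathscr{B}(G)$ and a Borel measurable $\psi \colon G \to F_{2}$ with $\psi \circ \phi(g) = \lambda_{g} \circ \psi$ for every $g \in F_{2}$. Fix the classical paradoxical decomposition of $F_{2} = \langle a,b \rangle$ built from the sets $W(x)$ of reduced words starting with $x \in \{a^{\pm 1}, b^{\pm 1}\}$: a partition $F_{2} = A_{1} \cupdot \dots \cupdot A_{n} \cupdot B_{1} \cupdot \dots \cupdot B_{m}$ together with elements $a_{i}, b_{j} \in F_{2}$ such that $F_{2} = a_{1}A_{1} \cupdot \dots \cupdot a_{n}A_{n}$ and $F_{2} = b_{1}B_{1} \cupdot \dots \cupdot b_{m}B_{m}$. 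Pulling back along $\psi$, the sets $\psi^{-1}(A_{i})$ and $\psi^{-1}(B_{j})$ are Borel, and the equivariance of $\psi$ yields, for each $y \in G$, the equivalence $y \in \phi(a_{i})(\psi^{-1}(A_{i})) \iff g_{i}^{-1}\psi(y) \in A_{i} \iff \psi(y) \in a_{i}A_{i}$; since the $a_{i}A_{i}$ partition $F_{2}$, the sets $\phi(a_{i})(\psi^{-1}(A_{i}))$ partition $G$, and analogously the $\phi(b_{j})(\psi^{-1}(B_{j}))$ partition $G$.

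It remains to replace the Borel piecewise translations $\phi(a_{i})$ and $\phi(b_{j})$ by honest left translations of $G$, which is the one place where the membership in $\mathscr{B}(G)$ (as opposed to an arbitrary Borel automorphism) is used. By definition, each $\phi(a_{i})$ carries a finite Borel partition $\mathscr{R}_{i}$ of $G$ on whose pieces $\phi(a_{i})$ agrees with some $\lambda_{k_{R}}$ for $R \in \mathscr{R}_{i}$, and analogously for $\phi(b_{j})$. Intersecting $\psi^{-1}(A_{i})$ with $\mathscr{R}_{i}$ (and likewise for the $B_{j}$'s) refines the pull-back into the desired Borel partitions $\mathscr{P}, \mathscr{Q}$ of $G$ and the associated translation elements $g_{P}, h_{Q} \in G$. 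I anticipate no genuine obstacle beyond this bookkeeping: the proof is a direct transfer of paradoxicality from $F_{2}$ to $G$ along the Borel semi-conjugacy $(\phi, \psi)$ provided by Theorem~\ref{theorem:main}, with the piecewise-translation structure of $\mathscr{B}(G)$ supplying the final refinement to single left translations.
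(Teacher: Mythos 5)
Your overall route coincides with the paper's: invoke Theorem~\ref{theorem:main}(3), pull a paradoxical decomposition of $F_{2}$ back along $\psi$ via the equivariance $\psi \circ \phi(g) = \lambda_{g} \circ \psi$, and refine by the finite Borel partitions witnessing that the relevant elements of $\mathscr{B}(G)$ are piecewise translations. However, there is a concrete mismatch at the very end. You start from the classical decomposition of $F_{2}$ in which a \emph{single} partition $F_{2} = A_{1} \cupdot \dots \cupdot A_{n} \cupdot B_{1} \cupdot \dots \cupdot B_{m}$ satisfies $F_{2} = \bigcupdot_{i} a_{i}A_{i}$ and $F_{2} = \bigcupdot_{j} b_{j}B_{j}$. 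After pulling back and refining, your families $\mathscr{P} = \{\psi^{-1}(A_{i}) \cap R\}$ and $\mathscr{Q} = \{\psi^{-1}(B_{j}) \cap R\}$ are partitions of $\psi^{-1}\bigl(\bigcup_{i} A_{i}\bigr)$ and of $\psi^{-1}\bigl(\bigcup_{j} B_{j}\bigr)$, respectively --- \emph{not} partitions of $G$, as the corollary demands (note that $\psi$ is necessarily surjective, so both sets are proper subsets of $G$). Moreover, the displayed equation of the corollary fails for your data: as you yourself establish, $\bigcupdot_{P} g_{P}P = \bigcupdot_{i} \phi(a_{i})(\psi^{-1}(A_{i})) = G$ and likewise $\bigcupdot_{Q} h_{Q}Q = G$, so the two unions each exhaust $G$ and cannot be disjoint from one another. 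What you have produced is a valid Borel paradoxical decomposition in the \emph{other} standard format (pieces partition $G$ once, translates tile $G$ twice), not the one stated. The repair is short: either convert at the end by replacing each piece $S$ carrying translation element $k$ by the piece $kS$ with element $k^{-1}$ (your observation that the sets $\phi(a_{i})(\psi^{-1}(A_{i}))$ partition $G$ is exactly what makes the inverted families partitions of $G$), or do what the paper does and feed into the pull-back a paradoxical decomposition of $F_{2}$ \emph{already in the stated format}, i.e.\ two partitions $\mathscr{P}, \mathscr{Q}$ of all of $F_{2}$ together with elements $g_{P}, h_{Q}$ such that $F_{2} = \bigcupdot_{P} g_{P}P \cupdot \bigcupdot_{Q} h_{Q}Q$; then the transfer to $G$ is verbatim.

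A smaller point on the easy direction: your justification ``a fortiori a set-theoretic paradoxical decomposition \dots or invoking Tarski's theorem directly'' is not valid for locally compact groups. Set-theoretic paradoxicality only obstructs amenability of $G$ \emph{as a discrete group}; for instance $SO(3)$ is compact, hence amenable, yet paradoxical with non-measurable pieces. The argument that does work is the one you also give: because the pieces are Borel, their indicators lie in $L^{\infty}(G)$, and applying a left-invariant mean on $L^{\infty}(G)$ (which exists if and only if $G$ is amenable) to the two tilings yields $1 = 2$. The Borel hypothesis is precisely what legitimises this step, so it should not be presented as dispensable.
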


This note is organized as follows. Building on some preparatory work concerning coarse spaces done in Section~\ref{section:coarse}, we prove Theorem~\ref{theorem:main} in Section~\ref{section:final}. Since our approach to proving Theorem~\ref{theorem:main} involves wobbling groups, and there has been recent interest in such groups, we furthermore include some complementary remarks about finitely generated subgroups of wobbling groups in Section~\ref{section:more}.

\section{Revisiting Whyte's result}\label{section:coarse}
		
Our proof of Theorem~\ref{theorem:main} will make use of Whyte's argument~\cite{whyte} -- in the form of Corollary~\ref{corollary:whyte.2}. More precisely, we will have to slightly generalize his result from metric spaces to arbitrary coarse spaces. However, this will just require very minor adjustments, and we only include a proof for the sake of completeness.
		
For convenience, let us recall some terminology from coarse geometry as it may be found in~\cite{roe}. For a relation $E \subseteq X \times X$ on a set $X$ and $x \in X$, $A \subseteq X$, let \begin{align*}
	& E[x] \defeq \{ y \in X \mid (x,y) \in E \} , & E[A] \defeq \bigcup \{ E[z] \mid z \in A \} .
\end{align*} A \emph{coarse space} is a pair $(X,\mathscr{E})$ consisting of a set $X$ and a collection $\mathscr{E}$ of subsets of $X \times X$ (called \emph{entourages}) such that \begin{enumerate}
	\item[$\bullet$] the diagonal $\Delta_{X} = \{ (x,x) \mid x \in X \}$ belongs to $\mathscr{E}$,
	\item[$\bullet$] if $F \subseteq E \in \mathscr{E}$, then also $F \in \mathscr{E}$,
	\item[$\bullet$] if $E,F \in \mathscr{E}$, then $E \cup F, E^{-1}, E \circ F \in \mathscr{E}$.
\end{enumerate} A coarse space $(X,\mathscr{E})$ is said to have \emph{bounded geometry} if \begin{displaymath}
	\forall E \in \mathscr{E} \, \forall x \in X \colon \quad E[x] \text{ is finite,}
\end{displaymath} and $(X,\mathscr{E})$ has \emph{uniformly bounded geometry} if \begin{displaymath}
	\forall E \in \mathscr{E} \, \exists m \geq 0 \, \forall x \in X \colon \quad \vert E[x] \vert \leq m .
\end{displaymath}

Among the most important examples of coarse spaces are metric spaces: if $X$ is a metric space, then we obtain a coarse space $(X,\mathscr{E}_{X})$ by setting \begin{displaymath}
	\mathscr{E}_{X} \defeq \{ E \subseteq X \times X \mid \sup \{ d(x,y) \mid (x,y) \in E \} < \infty \} .
\end{displaymath} Another crucial source of examples of coarse spaces is given by group actions. Indeed, if $G$ is a group acting on a set $X$, then we obtain a coarse space $(X,\mathscr{E}_{G})$ of uniformly bounded geometry by \begin{displaymath}
	\mathscr{E}_{G} \defeq \{ R \subseteq X \times X \mid \exists E \subseteq G \text{ finite}\colon \, R \subseteq \{ (x,gx) \mid x \in X, \, g \in E \} \} .
\end{displaymath} Note that the coarse structure induced by a finitely generated group~$G$ acting on itself by left translations coincides with the coarse structure on~$G$ generated by the metric associated with any finite symmetric generated subset of $G$ containing the neutral element. 
		
Now we come to amenability. Adopting the notion from metric coarse geometry, we call a coarse space $(X,\mathscr{E})$ of bounded geometry \emph{amenable} if \begin{displaymath}
	\forall \theta > 1 \, \forall E \in \mathscr{E} \, \exists F \subseteq X \text{ finite, } F \ne \emptyset \colon \quad \vert E[F] \vert \leq \theta \vert F \vert ,
\end{displaymath} which is (easily seen to be) equivalent to saying that \begin{displaymath}
	\exists \theta > 1 \, \forall E \in \mathscr{E} \, \exists F \subseteq X \text{ finite, } F \ne \emptyset \colon \quad \vert E[F] \vert \leq \theta \vert F \vert .
\end{displaymath} This definition is compatible with the existing notion of amenability for group actions (Proposition~\ref{proposition:amenability.of.actions}). Recall that an action of a group $G$ on a set $X$ is \emph{amenable} if the space $\ell^{\infty}(X)$ of all bounded real-valued functions on $X$ admits a $G$-invariant mean, i.e., there exists a positive linear functional $\mu \colon \ell^{\infty}(X) \to \mathbb{R}$ with $\mu (\mathbf{1}) = 1$ and $\mu (f \circ g) = \mu (f)$ for all $f \in \ell^{\infty}(X)$ and $g \in G$.

\begin{prop}[cf.~Rosenblatt~\cite{rosenblatt}]\label{proposition:amenability.of.actions} An action of a group $G$ on a set $X$ is amenable if and only if the coarse space $(X,\mathscr{E}_{G})$ is amenable. \end{prop}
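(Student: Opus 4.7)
The plan is to reduce the proposition to the classical equivalence --- essentially due to F\o lner, made explicit for group actions by Rosenblatt --- between the existence of a $G$-invariant mean on $\ell^{\infty}(X)$ and the F\o lner condition for the action of $G$ on $X$. First I would unwind the definition: every entourage in $\mathscr{E}_{G}$ is contained in a translation entourage $R_{E} \defeq \{ (x,gx) \mid x \in X, \, g \in E \}$ indexed by a finite $E \subseteq G$, and $R_{E}[F] = EF$ for every $F \subseteq X$. Hence coarse amenability of $(X,\mathscr{E}_{G})$ is equivalent to the assertion that, for every finite $E \subseteq G$ and every $\theta > 1$, there exists a finite non-empty $F \subseteq X$ with $\vert EF \vert \leq \theta \vert F \vert$, which is precisely the F\o lner condition for the action.

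For the direction ``F\o lner implies invariant mean'', I would fix an increasing net of finite subsets $E_{i} \subseteq G$ with $\bigcup_{i} E_{i} = G$ and a sequence $\theta_{i} \searrow 1$, select accompanying F\o lner sets $F_{i}$, and form the means $\mu_{i} \in \ell^{\infty}(X)^{\ast}$ defined by $\mu_{i}(f) = \vert F_{i} \vert^{-1} \sum_{x \in F_{i}} f(x)$. Any weak-$\ast$ cluster point $\mu$ is a mean, and for each $g \in G$ the elementary bound $\vert \mu_{i}(f \circ g) - \mu_{i}(f) \vert \leq \Vert f \Vert_{\infty} \vert gF_{i} \triangle F_{i} \vert / \vert F_{i} \vert$, combined with the F\o lner property applied to $\{ e,g \} \subseteq E_{i}$, forces $\mu$ to be $G$-invariant.

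For the converse, I would invoke the standard Day--Namioka argument in the action setting: a $G$-invariant mean on $\ell^{\infty}(X)$ can be approximated in the weak-$\ast$ topology by means induced by finitely-supported probability densities on $X$, and --- after a convexity step in $\ell^{1}(X)$ to pass from weak-$\ast$ to norm approximation --- one obtains a net of densities $h_{i} \in \ell^{1}(X)_{\geq 0}$ that are approximately $G$-invariant in $\ell^{1}$-norm (Reiter's condition for the action). The layer-cake decomposition $h_{i} = \int_{0}^{\infty} \mathbf{1}_{\{ h_{i} > t \}} \, dt$ then furnishes, for any prescribed finite $E \subseteq G$ and $\theta > 1$, a level set serving as the desired F\o lner set. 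The bulk of the work, and essentially the only real obstacle, lies in this last passage from Reiter to F\o lner; it is classical but must be formulated with the group~$G$ and the set~$X$ kept carefully distinct, which is exactly the point at which the reference to Rosenblatt is needed.
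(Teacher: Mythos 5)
Your proposal is correct and takes essentially the same route as the paper: both reduce the statement, by unwinding the definition of $\mathscr{E}_{G}$ (every entourage is contained in some $\{ (x,gx) \mid x \in X, \, g \in E \}$ with $E \subseteq G$ finite, and this entourage maps $F$ to $EF$), to the equivalence between amenability of the action and the F\o lner-type condition that for every $\theta > 1$ and every finite $E \subseteq G$ there is a finite non-empty $F \subseteq X$ with $\vert EF \vert \leq \theta \vert F \vert$. The only difference is that the paper quotes this equivalence directly from Rosenblatt, whereas you sketch its standard proof (weak-$\ast$ cluster points of averages over F\o lner sets in one direction; the Day--Namioka convexity argument, Reiter's condition, and the layer-cake level-set trick in the other), and that sketch is sound.
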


\begin{proof} Generalizing F\o lner's work~\cite{folner} on amenable groups, Rosenblatt~\cite{rosenblatt} showed that an action of a group $G$ on a set $X$ is amenable if and only if \begin{displaymath}
	\forall \theta > 1 \, \forall E \subseteq G \text{ finite } \exists F \subseteq X \text{ finite, } F \ne \emptyset \colon \quad \vert EF \vert \leq \theta \vert F \vert ,
\end{displaymath} which is easily seen to be equivalent to the amenability of $(X,\mathscr{E}_{G})$. \end{proof}
		
Let us turn our attention towards Theorem~\ref{theorem:whyte.1}. A straightforward adaptation of Whyte's original argument readily provides us with the following only very slight generalization (Theorem~\ref{theorem:whyte.2}). For a binary relation $E \subseteq X \times X$, we will denote the associated undirected graph by \begin{displaymath}
	\Gamma (E) \defeq (X,\{ \{ x,y \} \mid (x,y) \in E \}) .
\end{displaymath} Furthermore, let $\gr (f) \defeq \{ (x,f(x)) \mid x \in X \}$ for any map $f \colon X \to Y$. Our proof of Theorem~\ref{theorem:whyte.2} will utilize the simple observation that, for a map $f \colon X \to X$, the graph $\Gamma (\gr (f))$ is a forest, i.e., it contains no cycles, if and only if $f$ has no periodic points, which means that $P (f) \defeq \{ x \in X \mid \exists n \geq 1\colon \, f^{n}(x) = x \}$ is empty.
		
\begin{thm}\label{theorem:whyte.2} Let $d \geq 3$. A coarse space $(X,\mathscr{E})$ of bounded geometry is non-amenable if and only if there is $E \in \mathscr{E}$ such that $\Gamma (E)$ is a $d$-regular forest. \end{thm}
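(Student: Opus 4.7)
My plan is to establish both directions by adapting Whyte's original argument, with only minor changes needed to accommodate a bounded (rather than uniformly bounded) geometry coarse space.

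For the easy direction, I would argue as follows. Assume $E \in \mathscr{E}$ is such that $\Gamma(E)$ is a $d$-regular forest; I may replace $E$ by $E \cup \Delta_X$ without changing $\Gamma(E)$. For a finite nonempty $F \subseteq X$, the subgraph of $\Gamma(E)$ induced on $F$ is acyclic, hence has at most $|F| - 1$ edges. A degree count then shows that at least $d|F| - 2(|F|-1) = (d-2)|F| + 2$ edges of $\Gamma(E)$ have exactly one endpoint in $F$, and since each vertex of $X \setminus F$ is incident to at most $d$ of these boundary edges, $|E[F]| \geq |F| + ((d-2)|F| + 2)/d = (2 - 2/d)|F| + 2/d$. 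As $d \geq 3$ gives $2 - 2/d > 1$, this witnesses non-amenability of $(X,\mathscr{E})$.

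For the converse, I would start from non-amenability to produce a symmetric $E_{0} \in \mathscr{E}$ containing $\Delta_{X}$ and some $\theta > 1$ with $|E_{0}[F]| > \theta |F|$ for every finite nonempty $F \subseteq X$. Iterating inside the coarse structure, $|E_{0}^{n}[F]| \geq \theta^{n}|F|$, so after replacing $E_{0}$ by a sufficient power I may take the expansion constant as large as desired. Following Whyte, I would then apply Hall's marriage theorem in (say) the bipartite double cover of $\Gamma(E_{0}^{n})$ to convert this expansion into a subrelation $T \subseteq E_{0}^{n}$ whose associated graph is $d$-regular. Bounded rather than uniformly bounded geometry forces a K\"onig/compactness version of Hall's theorem: the neighborhoods $E_{0}^{n}[x]$ remain finite by the bounded geometry hypothesis, so Hall's condition applied on each finite subsystem, together with a compactness argument, still delivers a global matching.

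The main obstacle will be ensuring the forest property, since the union of several matchings need not be acyclic. Whyte's device is a sequential construction: one first produces a fixed-point-free bijection $\alpha \colon X \to X$ with $\gr(\alpha) \subseteq E_{0}^{n}$, so that $\Gamma(\gr(\alpha) \cup \gr(\alpha)^{-1})$ is a $2$-regular forest, i.e., a disjoint union of bi-infinite lines; additional matchings are then added one at a time, each chosen (again via a Hall-type argument) to connect distinct components of the partial subgraph and thereby preserve acyclicity. Iterating this step $d-2$ times yields the required $d$-regular forest, and taking $E$ to be the union of the graphs of the constructed matchings completes the proof.
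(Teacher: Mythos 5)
Your forward direction is correct and self-contained: the counting argument gives $\vert E[F]\vert \geq (2-2/d)\vert F\vert + 2/d$ with $2-2/d>1$, which indeed witnesses non-amenability (the paper instead quotes the sharper isoperimetric bound $\vert E[F]\vert \geq (d-1)\vert F\vert$ for $d$-regular trees). The converse, however, contains a genuine gap, and it sits exactly at the crux of the theorem. Your base step is false as stated: a \emph{fixed-point-free} bijection $\alpha$ with $\gr(\alpha)\subseteq E_{0}^{n}$ does not make $\Gamma(\gr(\alpha)\cup\gr(\alpha)^{-1})$ a $2$-regular forest. Any periodic orbit of $\alpha$ of length $k\geq 3$ produces a $k$-cycle in this graph, and an orbit of length $2$ produces a single-edge component; to obtain a disjoint union of bi-infinite lines you need $\alpha$ to have \emph{no periodic points at all}. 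No Hall-type argument delivers aperiodicity directly --- Hall (or the harem version) only produces a map whose graph lies in the given entourage, and nothing prevents that map from having periodic orbits. Arranging aperiodicity is precisely the hard part, and your sketch assumes it comes for free.

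The second gap is the inductive step: insisting that each new matching edge join two \emph{distinct} components of the current forest does not preserve acyclicity. Two matching edges between the same pair of components, or matching edges forming a cycle $A$--$B$--$C$--$A$ among components, create cycles in the union; what you would need is a perfect matching that is itself acyclic as a multigraph on the set of components, and you give no mechanism for producing one. The paper avoids both problems with a different decomposition: non-amenability yields a symmetric $E\in\mathscr{E}$ with $\vert E[F]\vert \geq d\vert F\vert$, the Hall harem theorem applied to $R=E\setminus\Delta_{X}$ gives a single map $f$ with $\gr(f)\subseteq R$ and $\vert f^{-1}(x)\vert = d-1$ for all $x$ (so $\Gamma(\gr(f))$ is already $d$-regular, out-degree $1$ plus in-degree $d-1$, with no superimposed matchings), and then $f$ is explicitly rewired into a map $f_{\ast}$ along rays escaping each periodic orbit, so that $\gr(f_{\ast})\subseteq E\circ E\in\mathscr{E}$, the in-degrees are preserved, and $P(f_{\ast})=\emptyset$; this rewiring --- killing periodic points, not merely fixed points --- is the device your proposal alludes to but never actually supplies.
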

		
\begin{proof} ($\Longleftarrow$) Due to a very standard fact about isoperimetric constants for regular trees~\cite[Example 47]{ParadoxicalDecompositions}, if $E \subseteq X \times X$ is symmetric and $\Gamma (E)$ is a $d$-regular tree, then $\vert E[F]\vert \geq (d-1) \vert F \vert$ for every finite subset $F \subseteq X$. Of course, this property passes to $d$-regular forests, which readily settles the desired implication.
			
($\Longrightarrow$) Suppose that $(X,\mathscr{E})$ is not amenable. Then there is a symmetric entourage $E \in \mathscr{E}$ such that $\vert E[F] \vert \geq d \vert F \vert$ for every finite $F \subseteq X$. Consider the symmetric relation $R \defeq E\setminus \Delta_{X} \subseteq X \times X$. Since $\vert R[x] \vert < \infty$ for every $x \in X$ and \begin{displaymath}
	\vert R[F] \vert \geq \vert E[F]\setminus F \vert \geq \vert E[F] \vert - \vert F \vert \geq (d-1)\vert F \vert
\end{displaymath} for every finite subset $F \subseteq X$, the Hall harem theorem~\cite[Theorem H.4.2]{harem} asserts that there exists a function $f \colon X \to X$ with $\gr (f) \subseteq R$ and $\vert f^{-1}(x) \vert = d-1$ for all $x \in X$. Notice that $f$ does not have any fixed points as $R \cap \Delta_{X} = \emptyset$. Since the set of $f$-orbits of its elements partitions the set $P(f)$, we may choose a subset $P_{0} \subseteq P(f)$ such that $P(f) = \bigcupdot_{x \in P_{0}} \{ f^{n}(x) \mid n \in \mathbb{N} \}$. Furthermore, choose functions $g,h \colon P_{0} \times \mathbb{N} \to X$ such that, for all $x \in P_{0}$ and $n \geq 1$, \begin{enumerate}
	\item[$\bullet$] $g(x,0) = x$ and $h(x,0) = f(x)$,
	\item[$\bullet$] $\{ g(x,n),h(x,n) \} \cap P(f) = \emptyset$,
	\item[$\bullet$] $f(g(x,n)) = g(x,n-1)$ and $f(h(x,n)) = h(x,n-1)$.
\end{enumerate} It follows that $g$ and $h$ are injective functions with disjoint ranges. Now we define $f_{\ast} \colon X \to X$ by setting \begin{displaymath}
	f_{\ast}(x) \defeq \begin{cases}
		g(z,n+2)  & \text{if } x = g(z,n) \text{ for } z \in P_{0} \text{ and even } n \geq 0 , \\
		g(z,n-2)  & \text{if } x = g(z,n) \text{ for } z \in P_{0} \text{ and odd } n \geq 3 , \\
		f^{2}(x)  & \text{if } x = h(z,n) \text{ for } z \in P_{0} \text{ and } n \geq 2 , \\
		f(x) & \text{otherwise}
	\end{cases}
\end{displaymath} for $x \in X$. We observe that \begin{displaymath}
	\gr (f_{\ast}) \subseteq \gr (f^{2})^{-1} \cup \gr (f^{2}) \cup \gr (f) .
\end{displaymath} In particular, $\gr (f_{\ast}) \subseteq E \circ E$ and therefore $\gr (f_{\ast}) \in \mathscr{E}$. Moreover, it follows that $P(f_{\ast}) \subseteq P(f)$. However, for every $x \in P(f)$, there exists a smallest $m \in \mathbb{N}$ such that $f^{m}(x) \in P_{0}$, and we conclude that $f_{\ast}^{m+1}(x) = f_{\ast}(f^{m}(x)) = g(f^{m}(x),2) \notin P(f)$ and hence $f^{m+1}_{\ast}(x) \notin P(f_{\ast})$, which readily implies that $x \notin P(f_{\ast})$. Thus, $P(f_{\ast}) = \emptyset$. In particular, $f_{\ast}$ has no fixed points. Furthermore, \begin{displaymath}
	f_{\ast}^{-1}(x) = \begin{cases}
		(f^{-1}(x) \cup \{ g(z,n-2) \} )\setminus \{ g(z,n+1) \}  & \text{if } x = g(z,n) \text{ for } z \in P_{0} \\ & \text{and even } n \geq 2 , \\
		(f^{-1}(x) \cup \{ g(z,n+2) \} )\setminus \{ g(z,n+1) \}  & \text{if } x = g(z,n) \text{ for } z \in P_{0} \\ & \text{and odd } n \geq 1 , \\
		(f^{-1}(x) \cup \{ h(z,n+2) \} )\setminus \{ h(z,n+1) \}  & \text{if } x = h(z,n) \text{ for } z \in P_{0} \\ & \text{and } n \geq 1 , \\
		(f^{-1}(x) \cup \{ h(z,2) \} )\setminus \{ z \}  & \text{if } x = f(z) \text{ for } z \in P_{0} , \\
		f^{-1}(x) & \text{otherwise}
\end{cases}
\end{displaymath} and thus $\vert f_{\ast}^{-1}(x) \vert = d-1$ for each $x \in X$. Hence, $\Gamma (\gr (f_{\ast}))$ is a $d$-regular forest. \end{proof}
		
Just as Theorem~\ref{theorem:whyte.1} corresponds to Corollary~\ref{corollary:whyte.1}, we can translate Theorem~\ref{theorem:whyte.2} into an equivalent statement about wobbling groups. Given a coarse space $(X,\mathscr{E})$, we define its \emph{wobbling group} (or \emph{group of bounded displacement}) as \begin{displaymath}
	\mathscr{W}(X,\mathscr{E}) \defeq \{ \alpha \in \Sym (X) \mid \gr (\alpha) \in \mathscr{E} \} .
\end{displaymath} Since the $4$-regular tree is isomorphic to the standard Cayley graph of the free group on two generators, we now obtain the following consequence of Theorem~\ref{theorem:whyte.2}.
		
\begin{cor}\label{corollary:whyte.2} A coarse space $X$ of bounded geometry is non-amenable if and only if $F_{2}$ is isomorphic to a semi-regular subgroup of $\mathscr{W}(X)$. \end{cor}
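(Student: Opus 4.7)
The plan is to derive this directly from Theorem~\ref{theorem:whyte.2} applied with $d = 4$, exploiting that the $4$-regular tree is nothing but the (unoriented) standard Cayley graph of $F_{2}$, so a $4$-regular forest is precisely a disjoint union of such Cayley graphs.

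For the implication ($\Leftarrow$), let $\phi \colon F_{2} \hookrightarrow \mathscr{W}(X)$ be an embedding with semi-regular image, and set $\alpha \defeq \phi(a)$, $\beta \defeq \phi(b)$ for the free generators $a, b$ of $F_{2}$. Define
\[
    E \defeq \gr(\alpha) \cup \gr(\alpha^{-1}) \cup \gr(\beta) \cup \gr(\beta^{-1}),
\]
which lies in $\mathscr{E}$ since each summand does (by the definition of $\mathscr{W}(X)$ and closure of $\mathscr{E}$ under inversion) and $\mathscr{E}$ is closed under finite unions. For every $x \in X$, the four points $\alpha^{\pm 1}(x), \beta^{\pm 1}(x)$ are pairwise distinct and distinct from $x$; otherwise some non-trivial reduced word of length at most two would fix $x$, violating semi-regularity. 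Hence $\Gamma(E)$ is $4$-regular, and it is a forest because any cycle through $x$ would yield a non-backtracking closed walk in the generators and hence a non-trivial reduced word in $\phi(F_{2})$ fixing $x$. Theorem~\ref{theorem:whyte.2} then delivers non-amenability of $X$.

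For the implication ($\Rightarrow$), Theorem~\ref{theorem:whyte.2} with $d = 4$ supplies a symmetric $E \in \mathscr{E}$ such that $\Gamma(E)$ is a $4$-regular forest. The plan is to upgrade each tree component into a copy of the oriented, labelled Cayley graph of $F_{2}$: pick a root in each connected component, and then, via a breadth-first enumeration, label each oriented edge by a letter from $\{a^{\pm 1}, b^{\pm 1}\}$ in such a way that every vertex has exactly one incident edge carrying each label and reversing any edge inverts its label. At the root all four letters are free; at every subsequent vertex exactly one outgoing label is forced (the inverse of the incoming edge's label), while the remaining three edges can be assigned the other three letters in any bijective fashion. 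Define $\alpha(x)$ (respectively $\beta(x)$) as the other endpoint of the unique edge at $x$ carrying the outgoing label $a$ (respectively $b$); the resulting bijections have graphs contained in $E$, so they lie in $\mathscr{W}(X)$, and the induced homomorphism $\phi \colon F_{2} \to \mathscr{W}(X)$ has semi-regular image, since evaluating a non-trivial reduced word at any $x$ traces a non-backtracking walk of positive length in the tree containing $x$ and hence cannot return to $x$. The only non-routine ingredient is the combinatorial edge-labelling, but it reduces to the straightforward inductive argument sketched above together with a choice of roots; everything else follows from the "no non-backtracking cycle" property of trees and the group axioms for $\mathscr{W}(X)$.
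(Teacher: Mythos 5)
Your proof is correct and takes essentially the same route as the paper, which obtains the corollary from Theorem~\ref{theorem:whyte.2} with $d=4$ via the identification of the $4$-regular tree with the standard Cayley graph of $F_{2}$; the paper leaves implicit exactly the details you spell out (the consistent edge-labelling in one direction, the length-$\leq 2$ coincidence and non-backtracking-cycle arguments in the other). One cosmetic point: Theorem~\ref{theorem:whyte.2} does not literally assert that the entourage $E$ is symmetric, but replacing $E$ by $E \cup E^{-1}$ changes neither $\Gamma(E)$ nor membership in $\mathscr{E}$, so your reading is harmless.
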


We note that Corollary~\ref{corollary:whyte.2} for group actions has been applied already (though without proof) in the recent work of the author and Thom~\cite[Corollary~5.12]{SchneiderThom}, where a topological version of Whyte's result for general (i.e., not necessarily locally compact) topological groups in terms of perturbed translations is established. In the present note, Corollary~\ref{corollary:whyte.2} will be used to prove Theorem~\ref{theorem:main}, which generalizes Whyte's result to locally compact groups by means of clopen and Borel piecewise translations and is in turn quite different to~\cite[Corollary~5.12]{SchneiderThom}.
		
\section{Proving the main result}\label{section:final}
		
In this section we prove Theorem~\ref{theorem:main}. For the sake of clarity, recall that a locally compact group $G$ is said to be \emph{amenable} if there is a $G$-invariant\footnote{In case of ambiguity, invariance shall always mean \emph{left} invariance.} mean on the space $C_{b}(G)$ of bounded continuous real-valued functions on $G$, i.e., a positive linear map $\mu \colon C_{b}(G) \to \mathbb{R}$ with $\mu (\mathbf{1}) = 1$ and $\mu (f \circ \lambda_{g}) = \mu (f)$ for all $f \in C_{b}(G)$ and $g \in G$. In preparation of the proof of Theorem~\ref{theorem:main}, we note the following standard fact, whose straightforward proof we omit.
		
\begin{lem}\label{lemma:paterson} Let $H$ be a subgroup of a locally compact group $G$ and consider the usual action of $G$ on the set $G/H$ of left cosets of $H$ in $G$. If $\mu \colon \ell^{\infty}(G/H) \to \mathbb{R}$ is a $G$-invariant mean and $\nu \colon C_{b}(H) \to \mathbb{R}$ is an $H$-invariant mean, then a $G$-invariant mean $\xi \colon C_{b}(G) \to \mathbb{R}$ is given by \begin{displaymath}
	\xi (f) \defeq \mu (xH \mapsto \nu ((f \circ \lambda_{x})\vert_{H})) \qquad (f \in C_{b}(G)) .
\end{displaymath} \end{lem}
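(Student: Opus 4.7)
The plan is to verify directly that the proposed functional $\xi$ is a well-defined, positive, normalized, $G$-invariant linear functional on $C_b(G)$.

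First, for each $f \in C_b(G)$, I would introduce the auxiliary function $F_f \colon G/H \to \mathbb{R}$ by $F_f(xH) \defeq \nu((f \circ \lambda_x)\vert_H)$ and check that it is well-defined and belongs to $\ell^\infty(G/H)$. Well-definedness amounts to showing independence from the choice of coset representative, i.e.\ that $\nu((f \circ \lambda_{xh})\vert_H) = \nu((f \circ \lambda_x)\vert_H)$ for every $h \in H$. A direct calculation on $H$ yields $(f \circ \lambda_{xh})\vert_H = ((f \circ \lambda_x)\vert_H) \circ \lambda_h$, so the equality follows at once from the $H$-invariance of $\nu$. Since $(f \circ \lambda_x)\vert_H$ lies in $C_b(H)$ (left translation being a homeomorphism of $G$) and $\nu$ is a mean, we get $|F_f(xH)| \leq \|f\|_\infty$, so $F_f \in \ell^\infty(G/H)$ and $\xi(f) = \mu(F_f)$ makes sense.

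Next, I would dispense with linearity, positivity, and normalization. Linearity of $\xi$ follows from the linearity of $\nu$ (which gives $F_{af + bg} = aF_f + bF_g$) and of $\mu$. If $f \geq 0$, then $F_f \geq 0$ pointwise by positivity of $\nu$, hence $\xi(f) \geq 0$. For normalization, $F_\mathbf{1}$ is the constant function $1$ on $G/H$, so $\xi(\mathbf{1}) = \mu(\mathbf{1}) = 1$.

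The only substantive point is $G$-invariance. For $g \in G$ and $f \in C_b(G)$, I would compute
\begin{displaymath}
F_{f \circ \lambda_g}(xH) = \nu(((f \circ \lambda_g) \circ \lambda_x)\vert_H) = \nu((f \circ \lambda_{gx})\vert_H) = F_f(gxH),
\end{displaymath}
which says that $F_{f \circ \lambda_g}$ is precisely the $g$-translate of $F_f$ under the natural left action of $G$ on $G/H$. Applying the $G$-invariance of $\mu$ then yields $\xi(f \circ \lambda_g) = \mu(F_{f \circ \lambda_g}) = \mu(F_f) = \xi(f)$, as required. No real obstacle arises in this argument; the only mild care needed is keeping the two translation actions (on $G$ and on $G/H$) straight when verifying the identity $F_{f \circ \lambda_g}(xH) = F_f(gxH)$.
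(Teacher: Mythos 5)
Your proof is correct and complete: the well-definedness of $xH \mapsto \nu((f\circ\lambda_x)\vert_H)$ via the identity $(f\circ\lambda_{xh})\vert_H = ((f\circ\lambda_x)\vert_H)\circ\lambda_h$ and the $H$-invariance of $\nu$, the routine mean properties, and the intertwining relation $F_{f\circ\lambda_g} = F_f\circ\lambda_g$ combined with the $G$-invariance of $\mu$ on $\ell^\infty(G/H)$ are exactly the points that need checking. The paper omits this proof as ``straightforward'', and your direct verification is precisely the intended argument.
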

		
It is a well-known fact (see Section 2 in~\cite{greenleaf}) that a locally compact group~$G$ (considered together with a left Haar measure) is amenable if and only if there exists a $G$-invariant mean on $L^{\infty}(G)$, i.e., a positive linear map $\mu \colon L^{\infty}(G) \to \mathbb{R}$ such that $\mu (\mathbf{1}) = 1$ and $\mu (f \circ \lambda_{g}) = \mu (f)$ for all $f \in C_{b}(G)$ and $g \in G$. An easy calculation now provides us with the following.
		
\begin{lem}\label{lemma:amenability} Let $G$ be a locally compact group. \begin{enumerate}
	\item[$(1)$] A mean $\mu \colon L^{\infty}(G) \to \mathbb{R}$ is $G$-invariant if and only if $\mu$ is $\mathscr{B}(G)$-invariant.
	\item[$(2)$] Let $H$ be a locally compact group, let $\phi \colon H \to \mathscr{B}(G)$ be a homomorphism and $\psi \colon G \to H$ be Borel measurable with $\psi \circ \phi (g) = \lambda_{g} \circ \psi$ for all $g \in H$. If $G$ is amenable, then so is $H$.
\end{enumerate} \end{lem}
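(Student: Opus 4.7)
The plan is that both items should follow from routine unwinding of the definition of $\mathscr{B}(G)$ as the group of Borel piecewise translations, combined with the standard invariant-mean formalism. No genuine obstacle is visible; the only technical point worth noting in advance is that every $\alpha \in \mathscr{B}(G)$ induces a well-defined operator on $L^{\infty}(G)$, because on each piece of a witnessing Borel partition the map $\alpha$ coincides with a left translation, which preserves the ideal of Haar-null sets.

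For part~(1), the implication ``$\mathscr{B}(G)$-invariant $\Rightarrow$ $G$-invariant'' is immediate from $\Lambda(G) \subseteq \mathscr{B}(G)$. For the converse, I would fix $\alpha \in \mathscr{B}(G)$, a witnessing Borel partition $\{P_{1},\ldots,P_{n}\}$, and elements $g_{1},\ldots,g_{n} \in G$ such that $\alpha|_{P_{i}} = \lambda_{g_{i}}|_{P_{i}}$. Since $\alpha$ is a bijection, the sets $Q_{i} \defeq g_{i}P_{i}$ also partition $G$, and for $f \in L^{\infty}(G)$ one has the pointwise identities
\[
	f = \sum_{i=1}^{n} \mathbf{1}_{Q_{i}} \cdot f, \qquad f \circ \alpha = \sum_{i=1}^{n} \mathbf{1}_{P_{i}} \cdot (f \circ \lambda_{g_{i}}),
\]
together with $\mathbf{1}_{Q_{i}} \circ \lambda_{g_{i}} = \mathbf{1}_{P_{i}}$. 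Applying $\mu$ and invoking $G$-invariance term by term gives $\mu(\mathbf{1}_{Q_{i}} \cdot f) = \mu(\mathbf{1}_{P_{i}} \cdot (f \circ \lambda_{g_{i}}))$ for each $i$, and summing over $i$ yields $\mu(f) = \mu(f \circ \alpha)$, which is what we want.

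For part~(2), amenability of $G$ provides a $G$-invariant mean $\mu$ on $L^{\infty}(G)$, which by~(1) is automatically $\mathscr{B}(G)$-invariant. Since $\psi \colon G \to H$ is Borel, I would set $\xi(f) \defeq \mu(f \circ \psi)$ for $f \in C_{b}(H)$; this is clearly a mean on $C_{b}(H)$. For $g \in H$, the intertwining relation $\psi \circ \phi(g) = \lambda_{g} \circ \psi$ gives
\[
	\xi(f \circ \lambda_{g}) = \mu(f \circ \lambda_{g} \circ \psi) = \mu(f \circ \psi \circ \phi(g)) = \mu(f \circ \psi) = \xi(f),
\]
where the third step uses $\phi(g) \in \mathscr{B}(G)$ together with the $\mathscr{B}(G)$-invariance of $\mu$ established in~(1). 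Hence $\xi$ is an $H$-invariant mean on $C_{b}(H)$, so $H$ is amenable.
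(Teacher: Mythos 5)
Your proposal is correct and follows essentially the same route as the paper: in part~(1) you decompose along the witnessing Borel partition and apply $G$-invariance piece by piece (you run the chain of equalities from $f$ to $f \circ \alpha$ via the image partition $\{g_{i}P_{i}\}$, whereas the paper runs it from $f \circ \alpha$ to $f$ via $\{P_{i}\}$ --- the same computation read in the opposite direction), and in part~(2) you use the identical pull-back mean $f \mapsto \mu(f \circ \psi)$ together with the intertwining relation and part~(1). Your preliminary remark that piecewise translations preserve Haar-null sets, so that $f \mapsto f \circ \alpha$ is well defined on $L^{\infty}(G)$, is a worthwhile point the paper leaves implicit.
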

			
\begin{proof} (1) Clearly, $\mathscr{B}(G)$-invariance implies $G$-invariance. To prove the converse, suppose that $\mu$ is $G$-invariant. Let $\alpha \in \mathscr{B}(G)$ and let $\mathscr{P}$ be a finite partition of $G$ into Borel subsets and $g_{P} \in G$ ($P \in \mathscr{P}$) with $\alpha\vert_{P} = \lambda_{g_{P}}\vert_{P}$ for each $P \in \mathscr{P}$. Now, \begin{align*}
	\mu (f \circ \alpha) &= \sum_{P \in \mathscr{P}} \mu \left( (f \circ \alpha) \cdot \mathbf{1}_{P} \right) = \sum_{P \in \mathscr{P}} \mu \left( (f \circ \lambda_{g_{P}}) \cdot \mathbf{1}_{P} \right) \\
		&= \sum_{P \in \mathscr{P}} \mu \left( f \cdot \left(\mathbf{1}_{P} \circ \lambda_{g_{P}^{-1}}\right)\right) = \sum_{P \in \mathscr{P}} \mu \left( f \cdot (\mathbf{1}_{g_{P} P})\right) \\
		&= \sum_{P \in \mathscr{P}} \mu \left( f \cdot \mathbf{1}_{\alpha(P)} \right) = \mu (f) 
\end{align*} for every $f \in L^{\infty}(G)$, as desired.
				
(2) Let $\nu \colon L^{\infty}(G) \to \mathbb{R}$ be a $G$-invariant mean. Define $\mu \colon C_{b}(H) \to \mathbb{R}$ by \begin{displaymath}
	\mu (f) \defeq \nu (f \circ \psi)  \qquad (f \in C_{b}(H)) .
\end{displaymath} It is easy to see that $\mu$ is a mean. Furthermore, (1) asserts that \begin{displaymath}
	\mu (f \circ \lambda_{g}) = \nu (f \circ \lambda_{g} \circ \psi) = \nu (f \circ \psi \circ \phi (g)) = \nu (f \circ \psi) = \mu (f)
\end{displaymath} for all $f \in C_{b}(H)$ and $g \in H$. Hence, $\mu$ is $H$-invariant. \end{proof}
			
We note that Lemma~\ref{lemma:amenability} readily settles the implication (3)$\Longrightarrow$(1) of Theorem~\ref{theorem:main}. The remaining part of the proof of Theorem~\ref{theorem:main} will rely on some structure theory for locally compact groups -- most importantly the following remarkable result of Rickert~\cite{rickert2} building on~\cite{rickert1}. We recall that a locally compact group $G$ is said to be \emph{almost connected} if the quotient of $G$ by the connected component of its neutral element is compact.
			
\begin{thm}[Theorem 5.5 in~\cite{rickert2}]\label{theorem:rickert} Any almost connected, non-amenable, locally compact group has a discrete subgroup being isomorphic to $F_{2}$. \end{thm}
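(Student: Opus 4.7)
The natural strategy is to pull the problem through the Yamabe structure theory down to a connected Lie group, locate a discrete copy of $F_{2}$ inside such a group via a ping-pong argument, and finally lift the embedding back to $G$ using the freeness of $F_{2}$.

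\emph{Reduction to connected Lie groups.} By the Yamabe--Gleason--Montgomery--Zippin approximation theorem, every almost connected locally compact group $G$ contains a compact normal subgroup $K$ such that $G/K$ is a Lie group; because the quotient map $G \to G/K$ sends $G_{0}$ into the identity component of $G/K$, the quotient is again almost connected. Since $K$ is compact and hence amenable, and extensions of amenable by amenable are amenable, the non-amenability of $G$ forces $G/K$ to be non-amenable. Passing to the identity component $L \defeq (G/K)_{0}$, which is of finite index in $G/K$, we obtain a non-amenable connected Lie group: any $L$-invariant mean would induce a $(G/K)$-invariant mean by averaging over the finite coset space $G/K / L$.

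\emph{A discrete $F_{2}$ inside $L$.} Using the Levi decomposition $L = R \cdot S$ together with the standard amenability criterion for connected Lie groups, non-amenability of $L$ forces the semisimple factor $S$ to have a non-compact simple component; via a root $\mathfrak{sl}_{2}$-triple in its Cartan decomposition, $L$ admits a closed subgroup locally isomorphic to $\mathrm{SL}(2,\mathbb{R})$. Inside this $\mathrm{SL}(2,\mathbb{R})$, Sanov's classical ping-pong argument applied to the matrices $\bigl(\begin{smallmatrix}1 & 2 \\ 0 & 1\end{smallmatrix}\bigr)$ and $\bigl(\begin{smallmatrix}1 & 0 \\ 2 & 1\end{smallmatrix}\bigr)$, both lying in the discrete subgroup $\mathrm{SL}(2,\mathbb{Z})$, yields a discrete free subgroup on two generators, and discreteness inside a closed subgroup transfers upward to $L$ and hence to $G/K$.

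\emph{Lifting to $G$.} Denote by $\pi \colon G \to G/K$ the quotient map. Since $F_{2}$ is free, the surjection $\pi^{-1}(F_{2}) \twoheadrightarrow F_{2}$ splits, yielding a homomorphism $s \colon F_{2} \to G$ with $\pi \circ s = \mathrm{id}$. The image $s(F_{2})$ is automatically discrete in $G$: any sequence $s(g_{n}) \to e$ in $G$ projects to $g_{n} = \pi(s(g_{n})) \to e$ in $G/K$, forcing $g_{n} = e$ eventually by discreteness of $F_{2}$ in $G/K$. The principal obstacle is the middle step, which genuinely relies on Lie-theoretic input -- the Levi decomposition and amenability criterion for connected Lie groups -- to isolate an $\mathrm{SL}(2,\mathbb{R})$-subgroup; the Yamabe reduction and the lifting argument are essentially formal consequences of structural and categorical facts.
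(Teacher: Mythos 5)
First, a point of order: the paper does not prove this statement at all --- it is quoted as Theorem 5.5 of Rickert~\cite{rickert2} and used as a black box --- so your proposal can only be measured against the literature, not against an internal argument. Your overall strategy (Yamabe reduction, Lie structure theory, ping-pong, lifting along quotients via freeness of $F_{2}$) is indeed the standard route to Rickert's theorem. Your first and third steps are essentially correct; in the lifting step the sequence argument should be replaced by a neighbourhood argument --- if $V$ is a neighbourhood of $e$ in $G/K$ with $V \cap F_{2} = \{e\}$, then $\pi^{-1}(V) \cap s(F_{2}) = \{e\}$, since $s(g) \in \pi^{-1}(V)$ forces $g \in V \cap F_{2} = \{ e \}$ --- which needs no metrizability and, importantly, works verbatim for an \emph{arbitrary} closed normal subgroup in place of the compact $K$.

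The genuine gap is in the middle step: the claim that a non-amenable connected Lie group $L$ admits a \emph{closed} subgroup locally isomorphic to $\mathrm{SL}(2,\mathbb{R})$ is false in general, because analytic subgroups with semisimple Lie algebra need not be closed when they have infinite centre. Concretely, let $z$ generate the infinite cyclic centre of the universal covering group $\widetilde{\mathrm{SL}}(2,\mathbb{R})$, fix $\theta$ with $\theta/2\pi$ irrational, and set $L \defeq \bigl(\widetilde{\mathrm{SL}}(2,\mathbb{R}) \times \mathbb{T}\bigr)/\Gamma$ where $\Gamma \defeq \{ (z^{n},e^{in\theta}) \mid n \in \mathbb{Z} \}$ is discrete and central. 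Then $L$ is a connected Lie group with $L/\mathrm{rad}(L) \cong \mathrm{PSL}(2,\mathbb{R})$, hence non-amenable. Its Lie algebra is $\mathfrak{sl}_{2}(\mathbb{R}) \oplus \mathbb{R}$, whose only subalgebra isomorphic to $\mathfrak{sl}_{2}(\mathbb{R})$ is $\mathfrak{sl}_{2}(\mathbb{R}) \oplus 0$ (such a subalgebra meets the centre trivially and, being perfect, admits no nonzero homomorphism to $\mathbb{R}$), and the corresponding analytic subgroup --- the image of $\widetilde{\mathrm{SL}}(2,\mathbb{R}) \times \{1\}$ --- is \emph{dense} in $L$, not closed. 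So this $L$ has no closed subgroup locally isomorphic to $\mathrm{SL}(2,\mathbb{R})$ whatsoever; your Sanov matrices do not even exist in it (they live in $\mathrm{SL}(2,\mathbb{R})$ itself, not in its covers), and the phrase ``discreteness transfers upward'' has nothing to attach to. The repair is to reverse the containment and use quotients, exactly in the spirit of your own third step: pass from $L$ to $S \defeq L/\mathrm{rad}(L)$ and then to its adjoint group $\mathrm{Ad}(S)$, which is linear, where the analytic subgroup attached to a root $\mathfrak{sl}_{2}$-triple \emph{is} closed (semisimple analytic subgroups of linear Lie groups are closed) and isomorphic to $\mathrm{SL}(2,\mathbb{R})$ or $\mathrm{PSL}(2,\mathbb{R})$; Sanov's group, being free and hence torsion-free, misses $-I$ and maps onto a discrete free subgroup there; finally lift the discrete $F_{2}$ back through each quotient $\mathrm{Ad}(S) \leftarrow S \leftarrow L$, include the open subgroup $L = (G/K)_{0}$ into $G/K$, and lift through $G \to G/K$, using at every stage the splitting over the free group together with the neighbourhood argument above. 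With that rerouting your proof closes up and is, in essence, Rickert's.
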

			
Now everything is prepared to prove our main result.
			
\begin{proof}[Proof of Theorem~\ref{theorem:main}] Evidently, (2) implies (3) as $\mathscr{C}(G)$ is a subgroup of $\mathscr{B}(G)$. Furthermore, (3) implies (1) due to Lemma~\ref{lemma:amenability} and the non-amenability of $F_{2}$.
				
(1)$\Longrightarrow$(2). Let $G$ be a non-amenable locally compact group. It follows by classical work of van Dantzig~\cite{VanDantzig} that any locally compact group contains an almost connected, open subgroup (see, e.g.,~\cite[Proposition~12.2.2~(c)]{palmer}). Choose any almost connected, open (and hence closed) subgroup $H$ of $G$. We will distinguish two cases depending upon whether $H$ is amenable.
				
\textit{$H$ is not amenable.} According to Theorem~\ref{theorem:rickert}, $H$ contains a discrete subgroup $F$ being isomorphic to $F_{2}$, and so does $G$. By a result of Feldman and Greenleaf~\cite{FeldmannGreenleaf}, the right coset projection $\pi \colon G \to F\! \setminus \! G, \, x \mapsto Fx$ admits a Borel measurable cross-section, i.e., there exists a Borel measurable map $\tau \colon F\! \setminus \! G \to G$ such that $\pi \circ \tau = \id_{F\setminus G}$. Clearly, the $F$-equivariant map $\psi \colon G \to F, \, x \mapsto x\tau(Fx)^{-1}$ is Borel measurable. This readily settles the first case: the maps \begin{displaymath}
	\phi \colon F_{2} \cong F \to \mathscr{C}(G), \quad g \mapsto \lambda_{g}
\end{displaymath} and $\psi$ are as desired.
				
\textit{$H$ is amenable.} Since $G$ is not amenable, Lemma~\ref{lemma:paterson} implies that the action of $G$ on the set $G/H$ is not amenable. By Proposition~\ref{proposition:amenability.of.actions}, this means that the coarse space $X \defeq (G/H,\mathscr{E}_{G})$ is not amenable. Due to Corollary~\ref{corollary:whyte.2}, there exists an embedding $\phi \colon F_{2} = F(a,b) \to \mathscr{W}(X)$ such that $\phi (F_{2})$ is semi-regular. Thus, by definition of~$\mathscr{W}(X)$, there exists some finite subset $E \subseteq G$ such that \begin{displaymath}
		\forall x \in \{ a,b \} \, \forall z \in X \, \exists g \in E \colon \quad \phi (x)(z) = gz .
\end{displaymath} Hence, we find a finite partition $\mathscr{P}$ of $X$ along with $g_{P}, h_{P} \in E$ ($P \in \mathscr{P}$) such that $\phi (a)\vert_{P} = \lambda_{g_{P}}\vert_{P}$ and $\phi (b)\vert_{P} = \lambda_{h_{P}}\vert_{P}$ for every $P \in \mathscr{P}$. Consider the projection $\pi \colon G \to G/H, \, x \mapsto xH$. Since $H$ is an open subgroup of $G$, the quotient topology on $G/H$, i.e., the topology induced by $\pi$, is discrete. So, $\pi^{-1}(\mathscr{P}) = \{ \pi^{-1}(P) \mid P \in \mathscr{P} \}$ is a finite partition of $G$ into clopen subsets. What is more, \begin{align*}
	G &= \bigcupdot_{P \in \mathscr{P}} \pi^{-1}(\phi(a)(P)) = \bigcupdot_{P \in \mathscr{P}} \pi^{-1}(g_{P}P) = \bigcupdot_{P \in \mathscr{P}} g_{P}\pi^{-1}(P) ,\\
	G &= \bigcupdot_{P \in \mathscr{P}} \pi^{-1}(\phi(b)(P)) = \bigcupdot_{P \in \mathscr{P}} \pi^{-1}(h_{P}P) = \bigcupdot_{P \in \mathscr{P}} h_{P}\pi^{-1}(P) .
\end{align*} Therefore, we may define $\overline{\phi} \colon \{ a,b \} \to \mathscr{C}(G)$ by setting \begin{align*}
	&\overline{\phi}(a)\vert_{\pi^{-1}(P)} = \lambda_{g_{P}} \vert_{\pi^{-1}(P)} , \qquad \overline{\phi}(b)\vert_{\pi^{-1}(P)} = \lambda_{h_{P}} \vert_{\pi^{-1}(P)} & (P \in \mathscr{P}) .
\end{align*} Consider the unique homomorphism $\phi^{\ast}\colon F_{2} \to \mathscr{C}(G)$ satisfying $\phi^{\ast}\vert_{\{ a,b \}} = \overline{\phi}$. Since $\pi \circ \overline{\phi}(x) = \phi (x) \circ \pi$ for each $x \in \{ a,b \}$, it follows that $\pi \circ \phi^{\ast}(w) = \phi (w) \circ \pi$ for every $w \in F_{2}$. Appealing to Remark~\ref{remark:semiregular}, we find a mapping $\psi \colon G/H \to F_{2}$ such that $\psi (\phi (w)(z)) = w\psi (z)$ for all $w \in F_{2}$ and $z \in G/H$. Since the quotient space $G/H$ is discrete, the map $\psi^{\ast} \defeq \psi \circ \pi \colon G \to F_{2}$ is continuous and therefore Borel measurable. Finally, we note that \begin{displaymath}
	\psi^{\ast}(\phi^{\ast}(w)(x)) = \psi (\pi (\phi^{\ast}(w)(x))) = \psi (\phi (w)(\pi (x))) = w \psi (\pi (x)) = w \psi^{\ast}(x)
\end{displaymath} for all $w \in F_{2}$ and $x \in G$, as desired. This completes the proof. \end{proof} 
			
Let us deduce Paterson's result~\cite{paterson86} from Theorem~\ref{theorem:main}.
			
\begin{proof}[Proof of Corollary~\ref{corollary:paterson}] ($\Longleftarrow$) This is clear.
				
($\Longrightarrow$) Let $G$ be a non-amenable locally compact group. By Theorem~\ref{theorem:main}, there exist a homomorphism $\phi \colon F_{2} \to \mathscr{B}(G)$ and a Borel measurable map $\psi \colon G \to F_{2}$ with $\psi \circ \phi (g) = \lambda_{g} \circ \psi$ for all $g \in F_{2}$. Consider any paradoxical decomposition of $F_{2}$ given by $\mathscr{P}$, $\mathscr{Q}$, $(g_{P})_{P \in \mathscr{P}}$, $(h_{Q})_{Q \in \mathscr{Q}}$. Taking a common refinement of suitable finite Borel partitions of $G$ corresponding to the elements $\phi (g_{P}), \phi (h_{Q}) \in \mathscr{B}(G)$ ($P \in \mathscr{P}, \, Q \in \mathscr{Q}$), we obtain a finite Borel partition $\mathscr{R}$ of $G$ along with mappings $\sigma \colon \mathscr{P} \times \mathscr{R} \to G$ and $\tau \colon \mathscr{Q} \times \mathscr{R} \to G$ such that \begin{align*}
	&\phi (g_{P})\vert_{R} = \lambda_{\sigma (P,R)}\vert_{R} &\phi (h_{Q})\vert_{R} = \lambda_{\tau (Q,R)}\vert_{R}
\end{align*} for all $P \in \mathscr{P}$, $Q \in \mathscr{Q}$, and $R \in \mathscr{R}$. By $\psi$ being Borel measurable, the refinements $\psi^{-1}(\mathscr{P}) \vee \mathscr{R}$ and $\psi^{-1}(\mathscr{Q}) \vee \mathscr{R}$ are finite Borel partitions of $G$. What is more, \begin{align*}
	G \, &= \bigcupdot_{P \in \mathscr{P}} \psi^{-1}(g_{P}P) \cupdot \bigcupdot_{Q \in \mathscr{Q}} \psi^{-1}(h_{Q}Q) \\
		&= \bigcupdot_{P \in \mathscr{P}} \phi(g_{P})(\psi^{-1}(P)) \cupdot \bigcupdot_{Q \in \mathscr{Q}} \phi (h_{Q})(\psi^{-1}(Q)) \\
		&= \bigcupdot_{(P,R) \in \mathscr{P} \times \mathscr{R}} \phi(g_{P})(\psi^{-1}(P) \cap R) \ \cupdot \bigcupdot_{(Q,R) \in \mathscr{Q} \times \mathscr{R}} \phi (h_{Q})(\psi^{-1}(Q) \cap R) \\
		&= \bigcupdot_{(P,R) \in \mathscr{P} \times \mathscr{R}} \sigma (P,R)(\psi^{-1}(P) \cap R) \ \cupdot \bigcupdot_{(Q,R) \in \mathscr{Q} \times \mathscr{R}} \tau (Q,R)(\psi^{-1}(Q) \cap R) .
\end{align*} Thus, the data \begin{displaymath}
	\psi^{-1}(\mathscr{P}) \vee \mathscr{R} , \quad \psi^{-1}(\mathscr{Q}) \vee \mathscr{R} , \quad (\sigma (P,R))_{(P,R) \in \mathscr{P} \times \mathscr{R}} , \quad (\tau (Q,R))_{(Q,R) \in \mathscr{Q} \times \mathscr{R}}
\end{displaymath} constitute a Borel paradoxical decomposition of $G$. \end{proof}

\section{Further remarks on wobbling groups}\label{section:more}

We are going to conclude with some additional remarks about wobbling groups, which we consider noteworthy complements of Corollary~\ref{corollary:whyte.2}. As van~Douwen's result~\cite{VanDouwen} shows, the presence of $F_{2}$ as a subgroup of the wobbling group does not imply the non-amenability of a coarse space. As it turns out, containment of $F_{2}$ is just a witness for positive asymptotic dimension (Proposition~\ref{proposition:dimension}).

Let us once again recall some terminology from~\cite{roe}. The \emph{asymptotic dimension} $\asdim (X,\mathscr{E})$ of a coarse space $(X,\mathscr{E})$ is defined as the infimum of all those $n \in \mathbb{N}$ such that, for every $E \in \mathscr{E}$, there exist $\mathscr{C}_{0},\ldots,\mathscr{C}_{n} \subseteq \mathscr{P}(X)$ with \begin{enumerate}
	\item[$\bullet$] $X = \bigcup \mathscr{C}_{0} \cup \ldots \cup \bigcup \mathscr{C}_{n}$,
	\item[$\bullet$] $(C \times D) \cap E = \emptyset$ for all $i \in \{ 0,\ldots ,n \}$ and $C,D \in \mathscr{C}_{i}$ with $C \ne D$,
	\item[$\bullet$] $\bigcup \{ C \times C \mid C \in \mathscr{C}_{i}, \, i \in \{ 0,\ldots,n \} \} \in \mathscr{E}$.
\end{enumerate} The concept of asymptotic dimension was first introduced for metric spaces by Gromov~\cite{GromovAsymptotic} and later extended to coarse spaces by Roe~\cite{roe}. We refer to~\cite{roe} for a thorough discussion of asymptotic dimension, related results and examples.

As we aim to describe positive asymptotic dimension in algebraic terms, we will unravel the zero-dimensional case in the following lemma. Let us denote by $[R]$ the equivalence relation on a set $X$ generated by a given binary relation $R \subseteq X \times X$.

\begin{lem}\label{lemma:vanishing.asymptotic.dimension} Let $(X,\mathscr{E})$ be a coarse space. Then $\asdim (X,\mathscr{E}) = 0$ if and only if $[E] \in \mathscr{E}$ for every $E \in \mathscr{E}$. \end{lem}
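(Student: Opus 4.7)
The plan is to recognise that an asymptotic-dimension-zero decomposition for an entourage $E$ is essentially the same data as an equivalence relation in $\mathscr{E}$ containing $E$, and then to identify $[E]$ as the smallest such relation. Both directions then follow by an almost formal translation between partitions and equivalence relations.

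First I would handle the implication $\asdim(X,\mathscr{E}) = 0 \Longrightarrow [E] \in \mathscr{E}$. Given $E \in \mathscr{E}$, I would enlarge it to $E' \defeq E \cup \Delta_X \in \mathscr{E}$ so as to enforce reflexivity. By hypothesis there is a family $\mathscr{C}_0 \subseteq \mathscr{P}(X)$ covering $X$, with $(C \times D) \cap E' = \emptyset$ for all distinct $C,D \in \mathscr{C}_0$, and with $R \defeq \bigcup \{ C \times C \mid C \in \mathscr{C}_0 \} \in \mathscr{E}$. Since $\Delta_X \subseteq E'$, distinct members of $\mathscr{C}_0$ must actually be disjoint, so $\mathscr{C}_0$ is a partition and $R$ is precisely the corresponding equivalence relation. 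The separation property immediately gives $E \subseteq R$, and since $R$ is an equivalence relation one has $[E] \subseteq R$; downward closure of $\mathscr{E}$ then yields $[E] \in \mathscr{E}$.

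For the converse, given $E \in \mathscr{E}$, I would set $R \defeq [E]$, which lies in $\mathscr{E}$ by hypothesis, and take $\mathscr{C}_0$ to be the collection of $R$-equivalence classes. The family $\mathscr{C}_0$ covers $X$ by reflexivity of $R$; distinct classes $C \neq D$ satisfy $(C \times D) \cap E = \emptyset$ because $E \subseteq R$ and elements of distinct classes are $R$-unrelated; and $\bigcup \{ C \times C \mid C \in \mathscr{C}_0 \} = R \in \mathscr{E}$. These are exactly the three conditions required to witness $\asdim(X,\mathscr{E}) \leq 0$.

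The only delicate point — the ``main obstacle'', such as it is — is that the cover $\mathscr{C}_0$ supplied by the definition of asymptotic dimension is not a priori a partition, and without disjointness the set $R$ need not be transitive. The harmless step of replacing $E$ by $E \cup \Delta_X$ forces the separation property to imply pairwise disjointness, which in turn upgrades $R$ to a genuine equivalence relation. Everything else is bookkeeping.
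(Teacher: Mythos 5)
Your proof is correct and follows essentially the same route as the paper's: the forward direction adjoins the diagonal to force the cover $\mathscr{C}_0$ to be a partition whose induced equivalence relation contains $E$ (hence $[E]$) and lies in $\mathscr{E}$, and the converse takes the $[E]$-equivalence classes as the witnessing family. The only difference is cosmetic — you spell out the ``straightforward to check'' converse that the paper leaves to the reader.
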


\begin{proof}($\Longrightarrow$) Let $E \in \mathscr{E}$. Without loss of generality, assume that $E$ contains $\Delta_{X}$. As $\asdim (X,\mathscr{E}) = 0$, there exists $\mathscr{C}_{0} \subseteq \mathscr{P}(X)$ such that \begin{enumerate}
	\item[$(1)$] $X = \bigcup \mathscr{C}_{0}$,
	\item[$(2)$] $(C \times D) \cap E = \emptyset$ for all $C,D \in \mathscr{C}_{0}$ with $C \ne D$,
	\item[$(3)$] $\bigcup \{ C \times C \mid C \in \mathscr{C}_{0} \} \in \mathscr{E}$.
\end{enumerate} As $\Delta_{X} \subseteq E$, assertion~(2) implies that any two distinct members of $\mathscr{C}_{0}$ are disjoint. Hence, (1) gives that $\mathscr{C}_{0}$ is a partition of $X$. By~(2), the induced equivalence relation $R \defeq \bigcup \{ C \times C \mid C \in \mathscr{C}_{0} \}$ contains $E$, thus $[E]$. By~(3), it follows that $[E] \in \mathscr{E}$.

($\Longleftarrow$) Let $E \in \mathscr{E}$. It is straightforward to check that $\mathscr{C}_{0} \defeq \{ [E][x] \mid x \in X \}$ has the desired properties. Hence, $\asdim (X,\mathscr{E}) = 0$. \end{proof}

Our proof of Proposition~\ref{proposition:dimension} below will rely upon the following slight modification of the standard argument for residual finiteness of free groups. For an element $w \in F_{2} = F(a,b)$, let us denote by $\vert w \vert$ the \emph{length} of $w$ with respect to the generators $a$ and $b$, i.e., the smallest integer $n \geq 0$ such that $w$ can be represented as a word of length $n$ in the letters $a,a^{-1},b,b^{-1}$.

\begin{lem}\label{lemma:free.group} Let $w \in F_{2}$ with $w \ne e$ and let $M \defeq \{ 0,\ldots ,2\vert w \vert \}$. Then there exists a homomorphism $\phi \colon F_{2} \to \Sym (M)$ such that $\phi (w) \ne e$ and $\vert \phi (v)(i) - i \vert \leq 2 \vert v \vert$ for all $i \in M$ and $v \in F_{2}$. \end{lem}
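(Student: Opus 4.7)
The plan is to adapt the standard residual-finiteness argument for $F_{2}$ to meet the displacement requirement. Write $w = x_{1}x_{2}\cdots x_{n}$ in reduced form, with $n = \vert w \vert \geq 1$, and set $M = \{0,1,\ldots,2n\}$. I will force $\phi(w)$ to move $0$ to $2n$ along the ``doubled'' trajectory $0 \mapsto 2 \mapsto 4 \mapsto \cdots \mapsto 2n$: for each $j \in \{1,\ldots,n\}$ I prescribe
\[
\phi(x_{j})\bigl(2(n-j)\bigr) = 2(n-j+1),
\]
which, depending on whether $x_{j}$ is $a^{\pm 1}$ or $b^{\pm 1}$, pins down a single value of $\phi(a)$ or $\phi(b)$ at an even point of $M$, equal to that point shifted by $\pm 2$. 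Since $w$ is reduced, no two of these prescriptions can clash: a clash at one vertex would force adjacent letters of $w$ to be mutual inverses.

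The central step is to extend the resulting partial maps to full permutations of $M$ of displacement $\leq 2$. For $\phi(a)$, the forced arrows group naturally into chains $2p \to 2(p+1) \to \cdots \to 2q$ (or reversed), one per maximal $a$-run of $w$; such a run has length $m = q-p \geq 1$, and by reducedness consists either entirely of $a$ or entirely of $a^{-1}$. I close each chain into a $(2m+1)$-cycle supported on the block $\{2p, 2p+1, \ldots, 2q\} \subseteq M$ by routing back through the interior odd integers via the zig-zag
\[
2q \mapsto 2q-1 \mapsto 2q-3 \mapsto \cdots \mapsto 2p+3 \mapsto 2p+1 \mapsto 2p
\]
(or its reverse, for a leftward chain); every arrow in this zig-zag has displacement $1$ or $2$. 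On every vertex of $M$ not swept out by some such $a$-cycle I declare $\phi(a)$ equal to the identity, and I construct $\phi(b)$ by the completely symmetric recipe.

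It then remains to verify consistency of the extension and the two conclusions. The cycles coming from distinct $a$-runs must lie in pairwise disjoint blocks of $M$; this is where reducedness enters a second time, since consecutive $a$-runs of $w$ are separated by at least one $b$-letter, which translates into a positive gap between the corresponding intervals $\{2p,\ldots,2q\}$ in $M$. Once this block-disjointness is in place, $\phi(a),\phi(b) \in \Sym(M)$ are well-defined permutations of displacement $\leq 2$. Tracing the trajectory $0,2,\ldots,2n$ under $\phi(w) = \phi(x_{1})\cdots\phi(x_{n})$ uses only the forced arrows, so $\phi(w)(0) = 2n \neq 0$ and in particular $\phi(w) \neq e$; the displacement bound $\vert \phi(v)(i) - i \vert \leq 2\vert v \vert$ for an arbitrary $v \in F_{2}$ then follows by telescoping along a reduced expression of $v$. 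The main obstacle is exactly the block-disjointness bookkeeping for the cycles — everything else is dictated by the zig-zag recipe.
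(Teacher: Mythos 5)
Your proposal is correct and is essentially the paper's own argument: the paper decomposes $w$ into maximal syllables $a^{k_j}$, $b^{\ell_j}$, assigns to each one the block of $M$ determined by partial sums of syllable lengths, and defines $\phi(a)$, $\phi(b)$ on these blocks by exactly your zig-zag cycles (even points shifted by $2$ along the run, odd interior points routed back, turning points moved by $1$), so that $\phi(w)(0)=2\vert w\vert$ and the displacement bound follows by telescoping. Your ``forced arrows plus consistency via reducedness'' presentation and the paper's explicit interval-based case analysis yield the same permutations, with your block-disjointness observation corresponding to the paper's disjointness of the syllable intervals.
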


\begin{proof} Let $(k_{0},\ldots,k_{n}) \in (\mathbb{Z}\setminus \{ 0\} )^{n} \times \mathbb{Z}$ and $(\ell_{0},\ldots,\ell_{n}) \in \mathbb{Z} \times (\mathbb{Z}\setminus \{ 0\} )^{n}$ such that $w = a^{k_{n}}b^{\ell_{n}}\cdots a^{k_{0}}b^{\ell_{0}}$. Of course, $\vert w \vert = \sum_{i = 0}^{n} \vert k_{i} \vert + \sum_{i = 0}^{n} \vert \ell_{i} \vert$. Let \begin{align*}
		&\alpha_{i} \defeq \sum\nolimits_{j=0}^{i-1} \vert k_{j} \vert + \sum\nolimits_{j=0}^{i} \vert \ell_{j} \vert , &\beta_{i} \defeq \sum\nolimits_{j=0}^{i-1} \vert k_{j} \vert + \sum\nolimits_{j=0}^{i-1} \vert \ell_{j} \vert
\end{align*} for $i \in \{ 0,\ldots,n\}$ and let $\beta_{n+1} \defeq \vert w \vert$. We will define a map $\phi \colon \{ a,b \} \to \Sym (M)$. First, let us define $\phi (a) \in \Sym (M)$ by case analysis as follows: if $i \in [2\alpha_{j},2\beta_{j+1}]$ for some $j \in \{ 0,\ldots,n\}$ with $k_{j} > 0$, then \begin{displaymath}
	\qquad \phi(a) (i) \defeq \begin{cases}
	i+2 & \text{if $i$ is even and } i \in [2\alpha_{j},2\beta_{j+1}-2], \\
	i-1 & \text{if } i = 2\beta_{j+1} , \\
	i-2 & \text{if $i$ is odd and } i \in [2\alpha_{j}+3,2\beta_{j+1}-1],  \\
	i-1 & \text{if } i = 2\alpha_{j} + 1 ,
	\end{cases}
\end{displaymath} if $i \in [2\alpha_{j},2\beta_{j+1}]$ for some $j \in \{ 0,\ldots,n\}$ with $k_{j} < 0$, then \begin{displaymath}
	\qquad \phi(a) (i) \defeq \begin{cases}
	i-2 & \text{if $i$ is even and } i \in [2\alpha_{j}+2,2\beta_{j+1}], \\
	i+1 & \text{if } i = 2\alpha_{j} , \\
	i+2 & \text{if $i$ is odd and } i \in [2\alpha_{j}+1,2\beta_{j+1}-3], \\
	i+1 & \text{if } i = 2\beta_{j+1} - 1 ,
	\end{cases}
\end{displaymath} and if $i \notin \bigcup \{ [2\alpha_{j},2\beta_{j+1}] \mid j \in \{ 0,\ldots,n\}, \, k_{j} \ne 0 \}$, then $\phi (a)(i) \defeq i$. Analogously, let us define $\phi (b) \in \Sym (M)$ by case analysis as follows: if $i \in [2\beta_{j},2\alpha_{j}]$ for some $j \in \{ 0,\ldots,n \}$ with $\ell_{j} > 0$, then \begin{displaymath}
	\qquad \phi(b) (i) \defeq \begin{cases}
	i+2 & \text{if $i$ is even and } i \in [2\beta_{j},2\alpha_{j}-2],  \\
	i-1 & \text{if } i = 2\alpha_{j} , \\
	i-2 & \text{if $i$ is odd and } i \in [2\beta_{j}+3,2\alpha_{j}-1], \\
	i-1 & \text{if } i = 2\beta_{j} + 1 ,
	\end{cases}
\end{displaymath} if $i \in [2\beta_{j},2\alpha_{j}]$ for some $j \in \{ 0,\ldots,n \}$ with $\ell_{j} < 0$, then \begin{displaymath}
	\qquad \phi(b) (i) \defeq \begin{cases}
	i-2 & \text{if $i$ is even and } i \in [2\beta_{j}+2,2\alpha_{j}],  \\
	i+1 & \text{if } i = 2\beta_{j} , \\
	i+2 & \text{if $i$ is odd and } i \in [2\beta_{j}+1,2\alpha_{j}-3], \\
	i+1 & \text{if } i = 2\alpha_{j} - 1 ,
	\end{cases}
\end{displaymath} and if $i \notin \bigcup \{ [2\beta_{j},2\alpha_{j}] \mid j \in \{ 0,\ldots, n\}, \, \ell_{j} \ne 0 \}$, then $\phi (b)(i) \defeq i$. It is easy to check that $\phi(a)$ and $\phi(b)$ are well-defined permutations of $M$, and that moreover $\vert \phi (x)(i) - i \vert \leq 2$ for each $x \in \{ a,b \}$ and all $i \in M$. Considering the unique homomorphism $\phi^{\ast} \colon F_{2} \to \Sym (M)$ with $\phi^{\ast}\vert_{\{ a,b \}} = \phi$, we observe that \begin{displaymath}
	\phi^{\ast}(w)(0) = \left(\phi(a)^{k_{n}}\phi(b)^{\ell_{n}} \cdots \phi(a)^{k_{0}}\phi(b)^{\ell_{0}}\right)(0) = 2\vert w \vert 
\end{displaymath} and thus $\phi^{\ast}(w) \ne e$. Also, $\vert \phi^{\ast} (v)(i) - i \vert \leq 2 \vert v \vert$ for all $i \in M$ and $v \in F_{2}$. \end{proof}

For the sake of clarity, we recall that a group is \emph{locally finite} if each of its finitely generated subgroups is finite. For a subset $S$ of a group $G$, we will denote by $\langle S \rangle$ the subgroup of $G$ generated by $S$.

\begin{prop}\label{proposition:dimension} Let $X$ be a coarse space of uniformly bounded geometry. The following are equivalent. \begin{enumerate}
	\item[$(1)$] $\asdim (X) > 0$.
	\item[$(2)$] $\mathscr{W}(X)$ is not locally finite.
	\item[$(3)$] $F_{2}$ embeds into $\mathscr{W}(X)$.
\end{enumerate} \end{prop}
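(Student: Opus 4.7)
The plan is to establish the cyclic implications $(3) \Longrightarrow (2) \Longrightarrow (1) \Longrightarrow (3)$. The first implication is immediate since $F_{2}$ itself is not locally finite.

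For $(2) \Longrightarrow (1)$, I argue contrapositively: assume $\asdim (X) = 0$ and let $H = \langle \alpha_{1}, \ldots, \alpha_{k} \rangle$ be any finitely generated subgroup of $\mathscr{W}(X)$. Set $E \defeq \Delta_{X} \cup \bigcup_{i=1}^{k} (\gr (\alpha_{i}) \cup \gr (\alpha_{i}^{-1})) \in \mathscr{E}$. Lemma~\ref{lemma:vanishing.asymptotic.dimension} yields $[E] \in \mathscr{E}$, and uniformly bounded geometry then produces $m \geq 0$ with $\vert [E][x] \vert \leq m$ for every $x \in X$. Since $[E][x] = Hx$, every $H$-orbit has size at most $m$, so every point stabiliser has index at most $m$ in $H$. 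Crucially, a finitely generated group possesses only finitely many subgroups of each finite index, and hence only finitely many isomorphism types $T_{1}, \ldots, T_{N}$ of transitive $H$-sets of size $\leq m$ arise among the $H$-orbits of $X$. The faithful action $H \hookrightarrow \Sym (X)$ therefore factors through the finite product $\prod_{i=1}^{N} \Sym (T_{i})$, whose kernel in $H$ is the intersection of finitely many finite-index subgroups of $H$; in particular, $\{ e \}$ has finite index in $H$, i.e., $H$ is finite.

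For $(1) \Longrightarrow (3)$, suppose $\asdim (X) > 0$. By Lemma~\ref{lemma:vanishing.asymptotic.dimension}, there exists a symmetric entourage $E \in \mathscr{E}$ containing $\Delta_{X}$ with $[E] \notin \mathscr{E}$. Since $E^{n} \in \mathscr{E}$ for every $n$ while $[E] = \bigcup_{n} E^{n}$ is not, no single $E^{n}$ contains $[E]$, whence the connected components of $\Gamma (E)$ must have unbounded size. Uniformly bounded geometry bounds the degree of $\Gamma (E)$, so a short case distinction --- either infinitely many components of increasing size are available, or a single infinite component yields an infinite simple ray via K\"onig's lemma applied to a spanning tree --- produces a sequence $(P_{k})_{k \geq 1}$ of pairwise disjoint simple $E$-paths whose lengths tend to infinity. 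Enumerate $F_{2} \setminus \{ e \} = \{ w_{1}, w_{2}, \ldots \}$ and, after passing to a subsequence, arrange that the length of $P_{k}$ is at least $2\vert w_{k} \vert$. Identifying an initial sub-path of $P_{k}$ of length $2\vert w_{k} \vert$ with $\{ 0, \ldots, 2\vert w_{k} \vert \}$ via the path order, Lemma~\ref{lemma:free.group} supplies a homomorphism $\phi_{k} \colon F_{2} \to \Sym (P_{k})$ fixing the remainder of $P_{k}$, satisfying $\phi_{k}(w_{k}) \ne e$ and with displacement at most $2\vert v \vert$ (in path coordinates) for every $v \in F_{2}$. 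Define $\phi (a), \phi (b) \in \Sym (X)$ by gluing the $\phi_{k}(a), \phi_{k}(b)$ and acting trivially on $X \setminus \bigsqcup_{k} P_{k}$, and extend to a homomorphism $\phi \colon F_{2} \to \Sym (X)$. The path-coordinate displacement bound translates into $\gr (\phi (v)) \subseteq E^{2\vert v \vert} \in \mathscr{E}$, so $\phi (v) \in \mathscr{W}(X)$ for every $v \in F_{2}$; and $\phi (w_{k})\vert_{P_{k}} = \phi_{k}(w_{k}) \ne e$ forces $\phi$ to be injective.

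The main obstacle lies in $(2) \Longrightarrow (1)$: turning uniformly bounded $H$-orbits into finiteness of $H$ is not purely geometric and relies on the group-theoretic observation that a finitely generated group has only finitely many subgroups of each finite index. By contrast, $(1) \Longrightarrow (3)$ is a direct assembly of Lemma~\ref{lemma:vanishing.asymptotic.dimension} (providing arbitrarily long disjoint $E$-paths) with Lemma~\ref{lemma:free.group} (providing the local actions of $F_{2}$ with controlled displacement).
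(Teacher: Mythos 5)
Your proof is correct, and its skeleton---the cycle $(3)\Longrightarrow(2)\Longrightarrow(1)\Longrightarrow(3)$, with Lemma~\ref{lemma:vanishing.asymptotic.dimension} used to bound orbits in one direction and Lemma~\ref{lemma:free.group} glued along pairwise disjoint long paths in the other---is the same as the paper's. You diverge in two places, both legitimately. For $(2)\Longrightarrow(1)$, after bounding the orbits $Hx = [E][x]$ by $m$, the paper embeds $H$ into $\prod_{x \in X} \Sym(Hx)$, views this inside $\Sym(M)^{X}$ with $M = \{0,\ldots,m-1\}$, and proves by hand (a counting argument) that any power of a finite group is locally finite; you instead invoke the fact that a finitely generated group has only finitely many subgroups of each index at most $m$, and finish by intersecting the finitely many distinct point stabilizers, which is trivial by faithfulness yet of finite index by Poincar\'e. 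Both are sound, and the standard proof of the fact you quote---counting homomorphisms into $\Sym(m)$---is the same counting idea the paper carries out explicitly, so the difference is mostly one of packaging; the only imprecision is that the faithful action does not literally \emph{factor through} $\prod_{i} \Sym(T_{i})$, but rather has the same kernel as the induced map into that product, which is exactly what your intersection argument uses. For $(1)\Longrightarrow(3)$ the divergence is more substantive: the paper produces its disjoint long simple $E$-paths via a pigeonhole claim (from $E^{\ell} \nsubseteq E^{\ell - 1}$ with $\ell = (n+1)(\vert F \vert + 1)$ one extracts a simple path of length $n$ avoiding any given finite set $F$), which uses no bounded geometry at all, whereas your K\"onig's-lemma/ball-counting case distinction leans on the degree bound supplied by uniformly bounded geometry. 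Within the hypotheses of the proposition this is harmless, but the paper's route is what underwrites Remark~\ref{remark:unbounded}: the implication $(1)\Longrightarrow(3)$ actually holds for arbitrary coarse spaces, uniform bounded geometry being needed only for $(2)\Longrightarrow(1)$. Your version trades that extra generality for a more familiar graph-theoretic argument.
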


\begin{proof} We will denote by $\mathscr{E}$ the coarse structure of $X$.

(2)$\Longrightarrow$(1). Let us recall a general fact: for a finite group $G$ and any set $M$, the group $G^{M}$ is locally finite. Indeed, considering a finite subset $S \subseteq G^{M}$ and the induced equivalence relation $R \defeq \{ (x,y) \in M \times M \mid \forall \alpha \in S \colon \, \alpha (x) = \alpha (y) \}$ on $M$, we observe that $N \defeq \{ R[x] \mid x \in M \}$ is finite, due to $G$ and $S$ being finite. The map $\pi \colon M \to N , \, x \mapsto R[x]$ induces a homomorphism $\phi \colon G^{N} \to G^{M}, \, \alpha \mapsto \alpha \circ \pi$. Evidently, $S$ is contained in the finite group $\phi (G^{N})$, and so is $\langle S \rangle$.

Suppose now that $\asdim (X) = 0$. Consider a finite subset $S \subseteq \mathscr{W}(X)$. We aim to show that $H \defeq \langle S \rangle$ is finite. To this end, we first observe that \begin{displaymath}
	\phi \colon H \to \prod\nolimits_{x \in X} \Sym (Hx) , \quad \alpha \mapsto (\alpha|_{Hx})_{x \in X}
\end{displaymath} constitutes a well-defined embedding. Since $D \defeq \bigcup \{ \gr (\alpha) \mid \alpha \in S \}$ belongs to $\mathscr{E}$, Lemma~\ref{lemma:vanishing.asymptotic.dimension} asserts that $E \defeq [D] \in \mathscr{E}$, too. Note that $\gr (\alpha) \in E$ for all $\alpha \in H$. Hence, $Hx \subseteq E[x]$ for every $x \in X$. Due to $X$ having uniformly bounded geometry, there exists $m \geq 0$ such that $\vert E[x] \vert \leq m$ and thus $\vert Hx \vert \leq m$ for every $x \in X$. Now, let $M \defeq \{ 0,\ldots,m-1 \}$. It follows that the group $\prod_{x \in X} \Sym (Hx)$ is isomorphic to a subgroup of $\Sym (M)^{X}$, and so is $H$ by virtue of $\phi$. Since $H$ is finitely generated and $\Sym (M)^{X}$ is locally finite by the remark above, this implies that $H$ is finite.

(3)$\Longrightarrow$(2). This is trivial.

(1)$\Longrightarrow$(3). Suppose that $\asdim (X) > 0$. By Lemma~\ref{lemma:vanishing.asymptotic.dimension}, there exists $E \in \mathscr{E}$ such that $[E] \notin \mathscr{E}$. Without loss of generality, we may assume that $\Delta_{X} \subseteq E = E^{-1}$. Hence, $[E] = \bigcup \{ E^{n} \mid n \in \mathbb{N} \}$. For each $n \in \mathbb{N}$, let us define \begin{displaymath}
	\left. T_{n} \defeq \left\{ x \in X^{n+1} \, \right| \vert \{ x_{0},\ldots,x_{n} \} \vert = n+1 , \, \forall i \in \{ 0,\ldots ,n-1 \} \colon \, (x_{i},x_{i+1}) \in E \right\} .
\end{displaymath} \textit{Claim.} For every $n \in \mathbb{N}$ and every finite subset $F \subseteq X$, there exists $x \in T_{n}$ such that $\{ x_{0},\ldots,x_{n} \} \cap F = \emptyset$.

\textit{Proof of claim.} Let $n \in \mathbb{N}$ and let $F \subseteq X$ be finite. Put $\ell \defeq (n+1)(\vert F \vert +1)$. Since $E \in \mathscr{E}$ and $[E] \notin \mathscr{E}$, we conclude that $E^{\ell} \nsubseteq E^{\ell-1}$. Let $x_{0},\ldots ,x_{\ell} \in X$ such that $(x_{0},x_{\ell}) \notin E^{\ell -1}$ and $(x_{i},x_{i+1}) \in E$ for every $i \in \{ 0,\ldots,\ell -1 \}$. As $\Delta_{X} \subseteq E$, it follows that $\vert \{ x_{0},\ldots ,x_{\ell} \} \vert = \ell + 1$. Applying the pigeonhole principle, we find some $j \in \{ 0,\ldots ,\ell -n \}$ such that $\{ x_{j},\ldots ,x_{j+ n} \} \cap F = \emptyset$. Hence, $y_{0}\defeq x_{j}, \ldots , y_{n} \defeq x_{j+n}$ are as desired. \qed

Since $N \defeq F_{2}\setminus \{ e \}$ is countable, we may recursively apply the claim above and choose a family $(x_{w})_{w \in N}$ such that \begin{enumerate}
	\item[(i)] $x_{w} \in T_{2 \vert w \vert}$ for every $w \in N$,
	\item[(ii)] $\{ x_{w,0} ,\ldots, x_{w,2\vert w \vert} \} \cap \{ x_{v,0} ,\ldots, x_{v,2\vert v \vert} \} = \emptyset$ for any two distinct $v,w \in N$.
\end{enumerate} Let $w \in N$ and define $D_{w} \defeq \{ x_{w,0} ,\ldots, x_{w,2\vert w \vert} \}$. Due to Lemma~\ref{lemma:free.group}, there exists a homomorphism $\phi_{w} \colon F_{2} \to \Sym (D_{w})$ such that $\phi_{w}(w) \ne e$ and \begin{displaymath}
	\phi_{w}(v)(x_{w,i}) \in \{ x_{w,j} \mid  j \in \{ 0 ,\ldots , 2\vert w \vert \} , \, \vert i-j \vert \leq 2\vert v \vert \}
\end{displaymath} for all $v \in F_{2}$, $i \in \{ 0,\ldots,2\vert w \vert\}$. Since $(x_{w,i},x_{w,i+1}) \in E$ for $i \in \{ 0,\ldots,2\vert w \vert -1 \}$, it follows that $\gr (\phi_{w}(v)) \subseteq E^{2 \vert v \vert}$ for all $v \in F_{2}$. As $D_{w}$ and $D_{v}$ are disjoint for any distinct $v,w \in N$, we may define a homomorphism $\phi \colon F_{2} \to \Sym (X)$ by setting \begin{displaymath}
	\phi (v)(x) \defeq \begin{cases}
		\phi_{w}(v)(x) & \text{if } x \in D_{w} \text{ for some } w \in N , \\
		x & \text{otherwise}
	\end{cases}
\end{displaymath} for $v \in F_{2}$ and $x \in X$. By construction, $\phi$ is an embedding, and furthermore \begin{displaymath}
	\gr (\phi (v)) \, \subseteq \, \Delta_{X} \cup \bigcup \{ \gr (\phi_{w}(v)) \mid w \in N \} \, \subseteq \,  E^{2\vert v \vert} \, \in \, \mathscr{E}
\end{displaymath} for every $v \in F_{2}$. Hence, the image of $\phi$ is contained in $\mathscr{W}(X)$, as desired. \end{proof}

\begin{remark}\label{remark:unbounded} The assumption of uniformly bounded geometry in Theorem~\ref{theorem:main} is needed only to prove that~(2) implies~(1). In fact, a similar argument as in the proof of (1)$\Longrightarrow$(3) (not involving Lemma~\ref{lemma:free.group} though) shows that the wobbling group of any coarse space not having uniformly bounded geometry contains an isomorphic copy of $\prod_{n \in \mathbb{N}} \Sym (n)$, hence~$F_{2}$. \end{remark}

One might wonder whether Proposition~\ref{proposition:dimension} could have been deduced readily from van~Douwen's result~\cite{VanDouwen} on $F_{2}$ embedding into $\mathscr{W}(\mathbb{Z})$. However, there exist uniformly discrete metric spaces of uniformly bounded geometry and positive asymptotic dimension whose wobbling group does not contain an isomorphic copy of~$\mathscr{W}(\mathbb{Z})$ (see Example~\ref{example.2}). We clarify the situation in Proposition~\ref{proposition:bornologous}.

As usual, a group is called \emph{residually finite} if it embeds into a product of finite groups, and a group is called \emph{locally residually finite} if each of its finitely generated subgroups is residually finite. Let us recall from~\cite{roe} that a map $f \colon X \to Y$ between two coarse spaces $X$ and $Y$ is \emph{bornologous} if, for every entourage $E$ of $X$, the set $\{ (f(x),f(y)) \mid (x,y) \in E \}$ is an entourage of $Y$.

\begin{prop}\label{proposition:bornologous} Let $X$ be a coarse space. The following are equivalent. \begin{enumerate}
	\item[$(1)$] There is a bornologous injection from $\mathbb{Z}$ into $X$.
	\item[$(2)$] $\mathscr{W}(X)$ is not locally residually finite.
	\item[$(3)$] $\mathscr{W}(X)$ contains a subgroup being isomorphic to $\mathscr{W}(\mathbb{Z})$.
\end{enumerate} \end{prop}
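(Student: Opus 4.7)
The plan is to close the cycle of implications $(1)\Longrightarrow(3)\Longrightarrow(2)\Longrightarrow(1)$. For $(1)\Longrightarrow(3)$, given a bornologous injection $\iota\colon\mathbb{Z}\hookrightarrow X$, I would extend each $\alpha\in\mathscr{W}(\mathbb{Z})$ to the bijection $\tilde\alpha\colon X\to X$ that acts as $\iota\alpha\iota^{-1}$ on $\iota(\mathbb{Z})$ and as the identity elsewhere. Bornologousness of $\iota$ guarantees $(\iota\times\iota)(\gr(\alpha))\in\mathscr{E}_X$ whenever $\gr(\alpha)\in\mathscr{E}_{\mathbb{Z}}$, whence $\gr(\tilde\alpha)=(\iota\times\iota)(\gr(\alpha))\cup\Delta_{X\setminus\iota(\mathbb{Z})}\in\mathscr{E}_X$, so $\tilde\alpha\in\mathscr{W}(X)$. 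The assignment $\alpha\mapsto\tilde\alpha$ is readily checked to be an injective homomorphism $\mathscr{W}(\mathbb{Z})\hookrightarrow\mathscr{W}(X)$.

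For $(3)\Longrightarrow(2)$, it suffices to exhibit a finitely generated, non-residually-finite subgroup of $\mathscr{W}(\mathbb{Z})$. Let $\tau\colon n\mapsto n+1$ be the shift and let $\sigma\defeq(0\ 1)\in\mathscr{W}(\mathbb{Z})$ be the adjacent transposition. Since $\tau^k\sigma\tau^{-k}=(k\ k+1)$, all adjacent transpositions lie in $\langle\tau,\sigma\rangle$, and therefore $\langle\tau,\sigma\rangle\cong\mathbb{Z}\ltimes\Sym_{\mathrm{fin}}(\mathbb{Z})$. The normal subgroup $\operatorname{Alt}_{\mathrm{fin}}(\mathbb{Z})\leq\langle\tau,\sigma\rangle$ is infinite and simple, so any finite-index normal subgroup $K\leq\langle\tau,\sigma\rangle$ must contain it (otherwise $K\cap\operatorname{Alt}_{\mathrm{fin}}(\mathbb{Z})=\{e\}$ would inject $\operatorname{Alt}_{\mathrm{fin}}(\mathbb{Z})$ into the finite quotient $\langle\tau,\sigma\rangle/K$). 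Hence $\langle\tau,\sigma\rangle$ is not residually finite, and neither is any group containing it.

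For $(2)\Longrightarrow(1)$, I would argue the contrapositive: assuming no bornologous injection $\mathbb{Z}\to X$ exists, I show $\mathscr{W}(X)$ is locally residually finite. First, $X$ must have bounded geometry, for if $E[x]$ were infinite for some $E\in\mathscr{E}_X$, any injection $f\colon\mathbb{Z}\to E[x]$ would satisfy $(f(m),f(n))\in E\circ E^{-1}$ for all $m,n$, giving a forbidden bornologous injection. Second, for any symmetric $E\in\mathscr{E}_X$ with $\Delta_X\subseteq E$, every component of the now locally finite graph $\Gamma(E)$ must be finite: otherwise K\"onig's lemma (applied to a spanning tree of an infinite component) produces an infinite simple ray $y_0,y_1,\ldots$, and the reindexing $z_n\defeq y_{2n}$ for $n\geq 0$, $z_n\defeq y_{-2n-1}$ for $n<0$, is an injection $\mathbb{Z}\to X$ with consecutive pairs in $E^2$, again forbidden. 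Finally, for any finite $S\subseteq\mathscr{W}(X)$ and the symmetric entourage $E\defeq\Delta_X\cup\bigcup_{\alpha\in S}(\gr(\alpha)\cup\gr(\alpha^{-1}))$, each orbit of $H\defeq\langle S\rangle$ on $X$ lies in a single finite component of $\Gamma(E)$; the restriction map $H\to\prod_{O}\Sym(O)$ is therefore an embedding into a product of finite symmetric groups, establishing residual finiteness of $H$.

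The bulk of the technical content sits in $(2)\Longrightarrow(1)$, where the elementary reindexing trick does the heavy lifting: in both applications it converts a ``one-sided'' infinite structure (an infinite entourage-ball $E[x]$, or an infinite ray in $\Gamma(E)$) into a bi-infinite bornologous injection $\mathbb{Z}\to X$, which by hypothesis cannot exist. I expect no serious obstacle beyond carefully tracking the entourages involved; the non-residual-finiteness used in $(3)\Longrightarrow(2)$ is a standard property of the group $\mathbb{Z}\ltimes\Sym_{\mathrm{fin}}(\mathbb{Z})$.
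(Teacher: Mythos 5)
Your proposal is correct, and it follows the same cycle of implications $(1)\Longrightarrow(3)\Longrightarrow(2)\Longrightarrow(1)$ as the paper; two of the three legs are essentially the paper's own arguments, while the third takes a genuinely different route. Your $(1)\Longrightarrow(3)$ is precisely the extension-by-identity construction of Remark~\ref{remark:bornologous.injection}(iii). Your $(2)\Longrightarrow(1)$ is the paper's argument run in contrapositive form: the paper likewise first disposes of the case of unbounded geometry, then observes that if every class $[E][x]$ were finite, restriction to these classes would embed each finitely generated subgroup of $\mathscr{W}(X)$ into a product of finite symmetric groups (your ``orbits in finite components'' step), and finally extracts an injective one-sided $E$-path from an infinite class by a compactness argument in $\prod_{m} E^{m}[x]$ --- which is K\"onig's lemma in different clothing; your explicit folding of the ray into a bi-infinite sequence merely replaces the paper's appeal to the bornologous bijection $\mathbb{Z}\to\mathbb{N}$ of Remark~\ref{remark:bornologous.injection}(ii). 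The genuinely different leg is $(3)\Longrightarrow(2)$: the paper embeds the lamplighter groups $\Sym(n)\wr\mathbb{Z}$ into $\mathscr{W}(\mathbb{Z})$ and quotes Gruenberg's theorem that $F\wr\mathbb{Z}$ is residually finite only for abelian $F$, whereas you take $\langle\tau,\sigma\rangle\cong\Sym_{\mathrm{fin}}(\mathbb{Z})\rtimes\mathbb{Z}$ and verify non-residual-finiteness by hand, using that the infinite simple group $\operatorname{Alt}_{\mathrm{fin}}(\mathbb{Z})$ is normal in $\langle\tau,\sigma\rangle$ and must therefore be contained in every finite-index normal subgroup. Your version is more elementary and self-contained (its only external input is simplicity of the finitary alternating group, and your verification of the dichotomy $K\cap\operatorname{Alt}_{\mathrm{fin}}(\mathbb{Z})\in\{\{e\},\operatorname{Alt}_{\mathrm{fin}}(\mathbb{Z})\}$ is sound since $K$ is normal in the ambient group); the paper's version outsources that work to a classical citation. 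Both witnesses live inside the same shift-plus-finitary-permutations picture, so the two proofs are otherwise interchangeable.
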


\begin{remark}\label{remark:bornologous.injection} (i) For groups there is no difference between positive asymptotic dimension and the existence of a bornologous injection of $\mathbb{Z}$: a group has asymptotic dimension $0$ if and only if it is locally finite, and any group which is not locally finite admits a bornologous injection of $\mathbb{Z}$ by a standard compactness argument (see, e.g.,~\cite[IV.A.12]{DeLaHarpe}). However, for arbitrary coarse spaces, even of uniformly bounded geometry, the situation is slightly different (see Example~\ref{example.2}).
	
(ii) One may equivalently replace $\mathbb{Z}$ by $\mathbb{N}$ in item~(1) of Proposition~\ref{proposition:bornologous}: on the one hand, the inclusion map constitutes a bornologous injection from $\mathbb{N}$ into $\mathbb{Z}$; on the other hand, there is a bornologous bijection $f \colon \mathbb{Z} \to \mathbb{N}$ given by \begin{displaymath}
	f(n) \defeq \begin{cases}
	2n & \text{if } n \geq 0 , \\
	2\vert n \vert - 1 & \text{if } n<0
	\end{cases} \qquad (n \in \mathbb{Z}) .
\end{displaymath} Unless explicitly stated otherwise, we always understand $\mathbb{N}$ as being equipped with the coarse structure generated by the usual (i.e., Euclidean) metric.

(iii) Any bornologous injection $f \colon X \to Y$ between two coarse spaces $X$ and $Y$ induces an embedding $\phi \colon \mathscr{W}(X) \to \mathscr{W}(Y)$ via \begin{displaymath}
	\phi(\alpha)(y) \defeq \begin{cases}
		f(\alpha (f^{-1}(y))) & \text{if } y \in f(X) , \\
		y & \text{otherwise}
	\end{cases} \qquad (\alpha \in \mathscr{W}(X), \, y \in Y) .
\end{displaymath} Hence, by (ii), the groups $\mathscr{W}(\mathbb{N})$ and $\mathscr{W}(\mathbb{Z})$ mutually embed into each other, and thus $\mathbb{Z}$ may equivalently be replaced by~$\mathbb{N}$ in item~(3) of Proposition~\ref{proposition:bornologous}. \end{remark}

\begin{proof}[Proof of Proposition~\ref{proposition:bornologous}] (1)$\Longrightarrow$(3). This is due to Remark~\ref{remark:bornologous.injection}(iii).

(3)$\Longrightarrow$(2). It suffices to show that $\mathscr{W}(\mathbb{Z})$ is not locally residually finite. A result of Gruenberg~\cite{gruenberg} states that, for a finite group $F$, the restricted wreath product $F \wr \mathbb{Z} = F^{(\mathbb{Z})} \rtimes \mathbb{Z}$ (i.e., the lamplighter group over $F$) is residually finite if and only if $F$ is abelian. For $n \geq 1$, the action of $\Sym (n) \wr \mathbb{Z}$ on $\mathbb{Z} = \bigcupdot_{r=0}^{n-1} n \mathbb{Z} + r$ given by \begin{displaymath}
	(\alpha , m).(nk + r) \defeq n(m+k) + \alpha_{m+k}(r) \quad \left( \alpha \in \Sym (n)^{(\mathbb{Z})}, \, m,k \in \mathbb{Z}, \, 0 \leq r < n \right)
\end{displaymath} defines an embedding of $\Sym (n) \wr \mathbb{Z}$ into $\Sym (\mathbb{Z})$, the image of which is contained in $\mathscr{W}(\mathbb{Z})$ as $\sup_{z \in \mathbb{Z}} \vert z - (\alpha , m).z \vert \leq n (\vert m \vert + 1)$ for every $(\alpha,m) \in\Sym (n) \wr \mathbb{Z}$. Since the embedded lamplighter groups are finitely generated and not residually finite for $n \geq 3$, it follows that $\mathscr{W}(\mathbb{Z})$ is not locally residually finite.

(2)$\Longrightarrow$(1). Let $\mathscr{E}$ denote the coarse structure of $X$. If $X$ does not have bounded geometry, then there exist $E \in \mathscr{E}$ and $x \in X$ such that $E[x]$ is infinite, and any thus existing injection $f \colon \mathbb{Z} \to X$ with $f(\mathbb{Z}) \subseteq E[x]$ is bornologous. Hence, we may without loss of generality assume that $X$ has bounded geometry. On the other hand, there must exist $E \in \mathscr{E}$ and $x \in X$ such that $[E][x]$ is infinite. Otherwise, $\mathscr{W}(X)$ would have to be locally residually finite: for any finite subset $F \subseteq \mathscr{W}(X)$, since $E \defeq \bigcup \{ \gr (\alpha) \mid \alpha \in F \} \in \mathscr{E}$, the homomorphism \begin{displaymath}
	\langle F \rangle \to \prod\nolimits_{x \in X} \Sym ([E][x]) , \quad \alpha \mapsto \left(\alpha\vert_{[E][x]}\right)_{x \in X}
\end{displaymath} would embed $\langle F \rangle$ into a product of finite groups. So, let $E \in \mathscr{E}$ and $x \in X$ such that $[E][x]$ is infinite. Without loss of generality, we may assume that $\Delta_{X} \subseteq E = E^{-1}$. Therefore, $[E] = \bigcup \{ E^{n} \mid n \in \mathbb{N} \}$. We conclude that $E^{n}[x] \ne E^{n+1}[x]$ and thus \begin{displaymath}
	\left. R_{n} \defeq \left\{ f \in X^{\mathbb{N}} \, \right\vert f_{0} = x, \, \vert\{ f_{0}, \ldots,f_{n} \} \vert = n + 1 , \, \forall i \in \mathbb{N} \colon \, (f_{i},f_{i+1}) \in E \right\}
\end{displaymath} is non-empty for all $n \in \mathbb{N}$. As $(R_{n})_{n \in \mathbb{N}}$ is a chain of closed subsets of the compact topological space $\prod\nolimits_{m \in \mathbb{N}} E^{m}[x]$, we have $R \defeq \bigcap_{n \in \mathbb{N}} R_{n} \ne \emptyset$. Since any member of $R$ is a bornologous injection from $\mathbb{N}$ into $X$, this implies~(1) by Remark~\ref{remark:bornologous.injection}(ii). \end{proof}

\begin{exmpl}\label{example.2} Let $\mathscr{I}$ be a partition of $\mathbb{N}$ into finite intervals with $\sup_{I \in \mathscr{I}} \vert I \vert = \infty$. Consider the metric space $X \defeq (\mathbb{N},d)$ given by \begin{displaymath}
	d(x,y) \defeq \begin{cases}
		\vert x - y \vert & \text{if } x,y \in I \text{ for some } I \in \mathscr{I} , \\
		\max (x,y) & \text{otherwise}
	\end{cases} \qquad (x,y \in \mathbb{N}) .
\end{displaymath} It is easy to see that $X$ has uniformly bounded geometry. Moreover, by Lemma~\ref{lemma:vanishing.asymptotic.dimension} and the unboundedness assumption for the interval lengths, it follows that $X$ has positive asymptotic dimension. On the other hand, essentially by finiteness of the considered intervals, there is no bornologous injection from $\mathbb{N}$ into $X$. Due to Proposition~\ref{proposition:bornologous}, this readily implies that $\mathscr{W}(\mathbb{Z})$ does not embed into $\mathscr{W}(X)$. \end{exmpl} 

The interplay between certain geometric properties of coarse spaces on the one hand and algebraic peculiarities of their wobbling groups on the other is a subject of recent attention~\cite{juschenko,cornulier}. It would be interesting to have further results in that direction, e.g., to understand if (and how) specific positive values for the asymptotic dimension may be characterized in terms of wobbling groups.
			
\section*{Acknowledgments}
			
The author would like to thank Andreas Thom for interesting discussions about Whyte's variant of the von Neumann conjecture, as well as Warren Moors and Jens Zumbr\"agel for their helpful comments on earlier versions of this note.

\end{document}